\journal{Advances in Applied Mathematics}
\def\Fraisse{Fra\"{\i}ss\' e}
\def\B{{\cal B}}
\def\C{{\cal C}}
\def\F{{\cal F}}
\def\Forb{\mathop\mathrm {Forb}\nolimits}
\def\Age{\mathop{\mathrm{Age}}\nolimits}
\def\ACl{\mathop{\mathrm{Acl}}\nolimits}
\def\Rel{\mathrm {Rel}}
\def\arity#1{a(\rel{}{#1})}
\def\sh{\mathop{\mathrm{Sh}}\nolimits}
\def\str#1{\mathbf {#1}}
\def\rel#1#2{R_{\mathbf{#1}}^{#2}}
\def\nbrel#1#2{R_{#1}^{#2}}
\theoremstyle{definition}
\newtheorem*{example}{Example}
\newtheorem*{remark}{Remark}
\theoremstyle{remark}
\theoremstyle{plain}
\newtheorem{thm}{Theorem}[section]
\newtheorem{corollary}[thm]{Corollary}
\newtheorem{lem}[thm]{Lemma} 
\theoremstyle{definition}
\newtheorem{defn}[thm]{Definition}
\begin{document}
\bibliographystyle{plain}

\begin{frontmatter}
\title{Bowtie-free graphs have a Ramsey lift}

\author[KAM]{Jan Hubi\v cka\fnref{g1}}
\ead{hubicka@kam.mff.cuni.cz}
\fntext[g1]{Supported by grant ERC-CZ LL-1201 of the Czech Ministry of Education and CE-ITI P202/12/G061 of GA\v CR. This research was partially done while the authors took part in Trimester Universality and Homogeneity at Hausdorff Institute (Bonn) in the fall 2013.}
\author[IUUK]{Jaroslav Ne\v set\v ril\fnref{g1}}
\ead{nesetril@iuuk.mff.cuni.cz}
\address[KAM]{Departement of Applied Mathematics (KAM)\\ Charles University\\ Prague, Czech Republic}
\address[IUUK]{Computer Science Institute of Charles University (IUUK)\\ Charles University\\ Prague, Czech Republic}

\begin{abstract}
A bowtie is a graph consisting of two triangles with one vertex identified.  We
show that the class of all (finite) graphs not containing a bowtie as a
subgraph has a Ramsey lift  (expansion). This solves one of the old problems
in the area and it is the first Ramsey class with a non-trivial
algebraic closure.
\end{abstract}
\begin{keyword}
Ramsey class \sep bowtie graph \sep universal graph \sep partite construction
\MSC[2008] 05C55 \sep 05C15 \sep 05D10 \sep 03C35
\end{keyword}
\end{frontmatter}

\section{Introduction}
\begin{figure}[h]
\centerline{\includegraphics{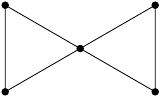}}
\caption{The bowtie graph.}
\label{fig:bowtie}
\end{figure}

A {\em bowtie} graph\footnote{This poetic name seems to be first used in \cite{Ringeisen1985}, see also \cite{Komjath1999}, 
{\em butterfly graph} or {\em hourglass graph} are other names used; it is ``\textbackslash bowtie'' in TeX and sign for ``natural join'' in databases.
Interestingly, bowtie appears in~\cite{Graham1990}}
 is formed by two triangles intersecting in a single vertex (see Figure~\ref{fig:bowtie}).
We denote by $\B$ the class of all finite graphs not containing a bowtie as a
(not necessarily induced) subgraph.
The class $\B$ seems to be a rather special class.
However, it appears that it plays a key role in the context of both Ramsey theory and model theory in the area related to
universality and homogeneity.
 It is the interplay of these two fields which makes this example interesting and important. We briefly explain both sides and their interplay in this introduction and in Section~\ref{remarks}.

\subsection{Ramsey Theory}
Ramsey theory (see~\cite{Graham1990,Nevsetvril1995} for background information) is established in the context of several mathematical areas. Structural Ramsey theory is
interested in generalisations of the Ramsey Theorem to as wide class of structures as possible. The key notion in this area is the Ramsey class.
To make this paper self-contained, we introduce it in the following notation (which is by now standard, see e.g. \cite{Nevsetvril1995}).

Let $\C$ be a class of structures endowed with embeddings. The class is usually
understood from the context. Let $\str{A},\str{B}$ be objects of $\C$.
Then by ${\str{B}\choose \str{A}}$  we denote the set of all sub-objects
$\widetilde{\str{A}}$ of $\str{B}$, $\widetilde{\str{A}}$ isomorphic to $\str{A}$. (By a
sub-object we mean that the inclusion is an embedding.) Using this notation the
definition of Ramsey class gets the following form:
A class $\C$ is a {\em Ramsey class} if for every two objects $\str{A},\str{B}\in \mathcal C$ and for every positive integer $k$ there exists object $\str{C}\in \mathcal C$ such that 
 for every partition of ${\str{C}\choose \str{A}}$ in $k$ classes there exists $\widetilde{\str B} \in {\str{C}\choose \str{B}}$ such that ${\widetilde{\str{B}}\choose \str{A}}$ belongs to one class of the partition.
It is usual to shorten the last part of the definition as $\str{C} \longrightarrow (\str{B})^{\str{A}}_2$.

The Ramsey classes originated in 70's (see \cite{Nevsetvril1995}) as the top of the line of Ramsey properties and examples found present the backbone of the structural Ramsey theory, see \cite{Nevsetvril1976a,Nevsetvril1989a,Nevsetvril1995}.

In most instances, a class is not Ramsey for some easily formulated reason and all one needs is to add some more information such as ordering or colouring of distinguished parts. For example, all finite graphs form a Ramsey class if we add an ordering of vertices, bipartite graphs need an ordering respecting the bipartition and colouring distinguishing the parts, disjoint unions of complete
graphs (or equivalences) needs an ordering respecting components. This additional information is usually called an expansion, or in a combinatorial setting a lift, of the original structure (see the next section). 

Bowtie-free graphs do not form Ramsey class.  We need not only linear ordering of vertices but also more complicated lifts
with seven types of vertices and large amount of types of edges, see Sections~\ref{sec:bowtie} and~\ref{sec:homogenization}.
As a simple example consider bowtie-free graph depicted in Figure~\ref{fig:bowtie-counterexample}. This graph contains two types of edges: edges in precisely one triangle and edges in two.  It can be verified by hand that this graph can not be extended to a bowtie-free graph where every edge is in multiple triangles. Consequently edges of every copy of this graph within a bowtie-free graph can be coloured red if they are in precisely one triangle and blue otherwise which contradicts the edge Ramsey property.
\begin{figure}
\centerline{\includegraphics{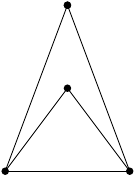}}
\caption{A counter-example for edge Ramsey property for bowtie-free graphs.}
\label{fig:bowtie-counterexample}
\end{figure}

\subsection{Model theory}
It is important to realise that for more complicated Ramsey questions (even when related to graphs) one needs to deal with more general structures.

A language $L$ is a set of relational symbols $\rel{}{}\in L$, each associated with natural number $\arity{}$ called \emph{arity}.
A \emph{(relational) $L$-structure} $\str{A}$ is a pair $(A,(\rel{A}{};\rel{}{}\in L))$ where $\rel{A}{}\subseteq A^{\arity{}}$ (i.e. $\rel{A}{}$ is a $\arity{}$-ary relation on $A$). The set $A$ is called the \emph{vertex set} or the \emph{domain} of $\str{A}$ and elements of $A$ are \emph{vertices}. 
The language is usually fixed and understood from the context (and it is in most cases denoted by $L$).
However it is the essence of this paper that  the languages considered are complex and we consider an interplay of several of them. This will be carefully described. 
If set $A$ is finite we call \emph{$\str A$ finite structure}. We consider only structures with finitely or countably many vertices.
The class of all (finite or countable) relational $L$-structures will be denoted by $\Rel(L)$.

We consider graphs as a special case of relational structure with one binary
relation.  We use bold letters $\str{A}$, $\str{B}$, \ldots{} to denote
structures and normal letters $G$, $H$, \ldots{} for graphs. The following are
standard graph theoretic notions re-stated in the language of model theory.
A \emph{homomorphism} $f:\str{A}\to \str{B}=(B,(\rel{B}{};\rel{}{}\in L))$ is a mapping $f:A\to B$ satisfying for every $\rel{}{}\in L$ the implication $(x_1,x_2,\ldots, x_{\arity{}})\in \rel{A}{}\implies (f(x_1),f(x_2),\ldots,f(x_{\arity{}}))\in \rel{B}{}$. (For a subset $A'\subseteq A$ we denote by $f(A')$ the set $\{f(x);x\in A'\}$ and by $f(\str{A})$ the homomorphic image of a structure.) 
If $f$ is injective, then $f$ is called a \emph{monomorphism}. A monomorphism is called \emph{embedding} if the above implication is equivalence, i.e. if for every $\rel{}{}\in L$ we have $(x_1,x_2,\ldots, x_{\arity{}})\in \rel{A}{}\iff (f(x_1),f(x_2),\ldots,f(x_{\arity{}}))\in \rel{B}{}$.  If $f$ is an embedding which is an inclusion then $\str{A}$ is a \emph{substructure} (or \emph{subobject}) of $\str{B}$. Note that substructures correspond to induced subgraphs. For an embedding $f:\str{A}\to \str{B}$ we say that $\str{A}$ is \emph{isomorphic} to $f(\str{A})$ and $f(\str{A})$ is also called a \emph{copy} of $\str{A}$ in $\str{B}$. Thus $\str{B}\choose \str{A}$ is defined as the set of all copies of $\str{A}$ in $\str{B}$. All copies considered in this paper are thus induced.

Using the language of model theory we can conveniently define the concept of lift discussed informally in the previous section.
Let $L^+$ be a language containing language $L$. By this we mean $L\subseteq L^+$ and the arities of the relations both in $L$ and $L^+$ are the same.
Then every structure $\str{X}=(X,(\rel{X}{}; \rel{}{}\in L^+))\in \Rel(L^+)$ may be viewed as a structure $\str{A}=(X,(\rel{X}{}; \rel{}{}\in L))\in \Rel(L)$ together with some additional relations $\rel{X}{}$ for $\rel{}{}\in L^+\setminus L$.
We call $\str{X}$ a {\em lift}. (In the model theory context lift is usually called an {\em expansion}.)
 In this situation  the structure $\str{A}$ is called
the {\em shadow} (or  alternatively  the {\em reduct}) of $\str{X}$. The class $\Rel(L^+)$ is
the class of all lifts of $\Rel(L)$.  
Conversely, $\Rel(L)$ is the
class of all shadows of $\Rel(L^+)$. In this paper the languages $L$ and $L^+$ will always be finite, we speak about {\em finite lifts}.
Given class of relational $L$-structures $\mathcal K$, we call class $\mathcal K^+$ of $L^+$-structures a {\em lift} of $\mathcal K$ if for every $\str{A}\in \mathcal K$ there is $\str{A}^+\in \mathcal K^+$ which is a lift of $\str{A}$ and moreover every shadow of a structure in $\mathcal K^+$ is in $\mathcal K$.

Two notions are related to our main result: A (countable) structure $\str{A}$ is said to be {\em universal for a class $\mathcal C$} of (finite or countably infinite) structures if every structure $\str{B}\in \mathcal C$ embeds to $\str{A}$.
A relational structure $\mathbf{A}$ is called {\em ultrahomogeneous} if every isomorphism between two induced finite
substructures of $\str{A}$ can be extended to an automorphism of
$\str{A}$.

It is a classical result of model theory that ultrahomogeneous structures may be alternatively described as \Fraisse{} limits of amalgamation classes of finite structures (see e.g. \cite{Hodges1993}). Here {\em amalgamation class} $\mathcal C$ is a hereditary class of structures containing only countably many mutually non-isomorphic structures which satisfy:
\begin{enumerate}
\item ({\em Joint embedding property}) For every $\str{A}, \str{B}\in \C$ there exists $\str{C}\in \C$ such that $\str{C}$ contains both $\str{A}$ and $\str{B}$ as substructures
\item ({\em Amalgamation property})
For $\str{A},\str{B}_1,\str{B}_2\in \C$ and $\alpha_1$ embedding of $\str{A}$ into $\str{B}_1$, $\alpha_2$ embedding of $\str{A}$ into $\str{B}_2$, there is $\str{C}\in \C$
 with embeddings $\beta_1:\str{B}_1 \to \str{C}$ and
$\beta_2:\str{B}_2\to\str{C}$ such that $\beta_1\circ\alpha_1 =
\beta_2\circ\alpha_2$. Every such structure $\str{C}$ is called an \emph{amalgamation} of $\str{B}_1$ and $\str{B}_2$ over $\str{A}$ with respect to $\alpha_1$ and $\alpha_2$. 
\end{enumerate}
We say that an amalgamation is \emph{strong} when $\beta_1(x_1)=\beta_2(x_2)$ if and
only if $x_1\in \alpha_1(A)$ and $x_2\in \alpha_2(A)$.  Less formally, a strong
amalgamation glues together $\str{B}_1$ and $\str{B}_2$ with an overlap no
greater than the copy of $\str{A}$ itself.  A strong amalgamation is \emph{free} if there are no tuples in any relations of $\str{C}$ containing both vertices of
$\beta_1(B_1\setminus \alpha_1(A))$ and $\beta_2(B_2\setminus \alpha_2(A))$.

For a structure $\str{A}$ the {\em age of $\str{A}$}, denoted by $\Age(\str{A})$, is the class of all finite structures which have embedding to $\str{A}$. Thus every homogeneous structure $\str{A}$ is determined by $\Age(\str{A})$ which forms an amalgamation class (see \cite{Hodges1993}).

Let $\str{A}$ be an $L$-relational structure and $S$ a finite subset of $A$.
The \emph{algebraic closure of $S$ in $\str{A}$}, denoted by $\ACl_\str{A}(S)$,
is the set all vertices $v\in A$ for which there is a formula $\phi$ in the
language $L$ with $\vert S\vert +1$ variables such that $\phi(\vec{S},v)$ is
true and there are only finitely many vertices $v'\in A$ such that
$\phi(\vec{S},v')$ is also true. (Here $\vec{S}$ is an arbitrary ordering of
vertices of $S$.)

Algebraic closure is, of course, related to amalgamation: For example, it is easy to see that if an ultrahomogeneous structure $\str{H}$ has
trivial closure (i.e. $\ACl_\str{H}(S)=S$ for every $S\subseteq H$) then its age is closed for strong amalgamation~\cite{Cherlin1999}.

The link between Ramsey classes and ultrahomogeneous structures was established in~\cite{Nevsetvril1989a}:
 Under a mild assumption any Ramsey class is an amalgamation class and thus it is an age of 
an ultrahomogeneous structure. This was used in \cite{Nevsetvril1989a} to completely characterise hereditary Ramsey classes   of undirected graphs. 
(Essentially, all Ramsey classes of graphs were known earlier, \cite{Nevsetvril1977b}.)
This connection of Ramsey classes proved to be fruitful and led to the characterisation programme for Ramsey classes \cite{Nevsetril2005}  and to an important connection of Ramsey classes with topological dynamics and ergodic theory \cite{Kechris2005}.

As we indicated above a given class $\mathcal C$ is often not Ramsey but $\mathcal C$  may have an easy lift $\mathcal C^+$ which is Ramsey and thus it leads to the age of an ultrahomogeneous structure~$\str{U}^+$. This in turn means that the shadow $\str{U}$ of $\str{U}^+$ is universal for $\mathcal C$. In this sense \emph{the universality is the first test for the existence of a Ramsey lift.}

We briefly comment on this connection at the end of this paper in Section~\ref{remarks}.

\subsection{Statement of results}

The rest of the paper is organised as follows:  
Section~\ref{sec:bowtie} contains a detailed description of the structure  of bowtie-free graphs. This leads to an explicit homogenisation of these graphs by means of lifts $L_0$, $L_1$ and $L_2$ which will be introduced in Section~\ref{sec:homogenization}. In Section~\ref{sec:reduced} we give a simpler (reduced) variant of our lift. In Section~\ref{sec:ramsey} we review the basic properties of Ramsey classes used in our proof.
The proof of Ramsey property splits into two parts: In Section~\ref{sec:star} we prove the Ramsey property for incomplete lifts,
 and finally in Section~\ref{sec:final} we combine this to obtain our main result:
\begin{thm}
\label{thm:intro}
The class $\B$ of all finite bowtie-free graphs has a finite Ramsey lift.
\end{thm}

In a more detailed way this is formulated as Theorem~\ref{thm:mainIII} below.
 This Ramsey lift includes special ``admissible'' orderings. As an explanation of this and as an application
of Theorem~\ref{thm:intro} we then prove the lift property for the class of orderings in Section~\ref{sec:expansion} (lift property is introduced there).
The final section contains some remarks and open problems and comments of the relationship of Ramsey classes and universal structures.

\section{Structure of bowtie-free graphs}
\label{sec:bowtie}

In order to prove the Ramsey property one has to understand the lifted class very well and the lift has to be  explicit.
We start to develop the structure of bowtie-free graphs by means of the following concepts which will describe the structure of triangles in bowtie-free graphs. 

\begin{defn} [Chimneys]
For $n\geq 2$, an {\em $n$-chimney graph}, $Ch_n$, is a free amalgamation of $n$ triangles over one common edge.
A {\em chimney graph} is any graph isomorphic to $Ch_n$ for some $n\geq 2$.
\end{defn}

Chimneys together with $K_4$ (a clique on 4 vertices) will form the only components of bowtie-free graphs formed by triangles. The assumption $n \geq 2$  for chimney is a technical assumption to avoid isolated triangles. Note also that $Ch_2$ is not an induced subgraph of $K_4$.
 
\begin{defn}[Good bowtie-free graphs]
\label{def:goodgraph}
A bowtie-free graph $G=(V,E)$ is {\em good} if every vertex is contained either in an induced copy of chimney graph or a copy of the complete graph $K_4$.
\end{defn}

The structure of bowtie-free graphs is captured by means of the following three lemmas:

\begin{lem}
\label{lem:bowtiestructure}
Every bowtie-free graph $G$ is an induced subgraph of some good bowtie-free graph $G'$.
\end{lem}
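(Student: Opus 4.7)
\begin{proof*}[Proof sketch]
The plan is to classify the vertices of $G$ by how they sit inside the triangles of $G$ and then to extend $G$ with a small number of fresh vertices so that every vertex ends up in a chimney or a $K_4$.

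The key structural observation is that in a bowtie-free graph any two triangles sharing a vertex must share an edge through that vertex. A short case analysis based on this shows that for every vertex $v$ contained in at least one triangle of $G$, either (a) all triangles through $v$ share a common edge $\{v,a\}$, or (b) $v$ is contained in a $K_4$ and all triangles through $v$ lie inside this $K_4$. I then partition the vertices of $G$ into five classes: (0) vertices in no triangle; (1) vertices in at least two triangles sharing a common edge, i.e., on the spine of a chimney; (2) vertices in exactly one triangle $\{v,a,b\}$ for which some further triangle $\{a,b,z\}$ exists in $G$, i.e., rim vertices of a chimney; (3) vertices in a $K_4$; and (4) vertices lying in a single triangle $\{v,a,b\}$ where none of $v,a,b$ is in any other triangle of $G$. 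Classes (1)--(3) are already good, so only (0) and (4) require modification.

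For each vertex $v$ of class (0) I attach a fresh $C_2$ chimney by introducing three new vertices $u,w_1,w_2$ and the edges $\{v,u\}, \{v,w_1\}, \{u,w_1\}, \{v,w_2\}, \{u,w_2\}$; then $v$ lies in this chimney. For each triangle $\{v,a,b\}$ of class (4) I introduce one fresh vertex $w$ and the edges $\{v,w\}, \{a,w\}$, extending $\{v,a,b\}$ to a $C_2$ chimney with spine $\{v,a\}$ and placing all three of $v,a,b$ inside this chimney. The different modifications all use fresh vertices, so they do not interact with each other.

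The main point to verify is that no bowtie is created. Each new triangle contains at least one fresh vertex, and the fresh vertices in any single local modification lie in at most two new triangles, which share a common edge; so the new triangles do not form a bowtie among themselves. A new triangle could create a bowtie only by sharing a single vertex with an old triangle of $G$; but the only vertices shared between a new triangle and the rest of $G$ are the attachment vertices, namely $v$ in case (0) or $v,a,b$ in case (4), and by the very definition of these classes those vertices lie in no other triangle of $G$. Hence no bowtie arises and the resulting graph $G'$ is a good extension of $G$. The main obstacle is the bookkeeping in the structural step, namely showing that classes (1)--(3) really do account for every bowtie-free situation in which $v$ is already in a chimney or $K_4$; once this is done, the rest of the argument is a direct verification.
\end{proof*}
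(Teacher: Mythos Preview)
Your proof is correct and follows essentially the same approach as the paper: the paper's two-step construction (attach a fresh $C_2$ to each triangle-free vertex; add one new vertex to each isolated triangle to turn it into a $C_2$) is exactly your treatment of classes~(0) and~(4), and the bowtie-freeness check is the same argument. Your version front-loads a more explicit five-class structural classification, while the paper states the construction directly and only invokes the ``two triangles sharing a vertex share an edge'' observation in the verification step, but the content is identical.
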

\begin{proof}
Every bowtie-free graph $G$ can be extended in the following way:
\begin{enumerate}
 \item[1.] For every vertex $v$ not contained in a triangle add a new induced copy of $Ch_2$ and identify vertex $v$ with one of vertices of $Ch_2$.
 \item[2.] For every triangle $v_1,v_2,v_3$ that is not part of a 2-chimney nor $K_4$ add a new vertex $v_4$ and triangle $v_1,v_2,v_4$
       turning the original triangle into $Ch_2$.
\end{enumerate}
It is easy to see that step $1.$ can not introduce new bowtie.

Assume, to the contrary, that step $2.$ introduced a new bowtie. Further assume that $v_1$ is the
unique vertex of degree 4 of this new bowtie and consequently there is another
triangle on vertex $v_1$ in $G$.  Because $G$ is bowtie-free, this triangle
must share a common edge with triangle $v_1,v_2,v_3$ and therefore
triangle $v_1,v_2,v_3$ is already part of $K_4$ or a 2-chimney in the original graph $G$.  A
contradiction.
\end{proof}

For a bowtie-free graph $G=(V,E)$ we split its edge set into two types: $E_0 = E_0(G)$ consisting of all
edges in triangles and $E_1 = E_1(G)$ consisting of all remaining edges. We also speak about {\em edges of type $0$}  and {\em edges of type $1$}. 
Put also $G_0=(V,E_0)$.

\begin{lem}
\label{lem:chimneys}
\label{lem:bowtiefree}
For every good bowtie-free graph $G=(V,E)$ the graph $G_0$ is a disjoint union of
induced copies of chimneys and $K_4$.

Conversely let $G=(V,E)$ be a graph with every vertex contained either in an induced copy of chimney $Ch_n$, $n\geq 2$, or a copy of the complete graph $K_4$. If graph $G_0=(V,E_0)$ is a disjoint union of induced copies of chimneys and $K_4$ and the remaining edges of $G$ (i.e. edges in $E_1$) do not form a triangle, then $G$ is a good bowtie-free graph.
\end{lem}

\begin{proof}
First part of the statement follows directly from the fact that $Ch_n$, $n\geq 1$, and $K_4$ are
the only connected bowtie-free graphs with every vertex and edge in a triangle.

The second part of the statement follows from the fact that a bowtie in $G$ must be a bowtie in $G_0$ and $G_0$ is a bowtie-free by the assumption.
\end{proof}

\begin{figure}
\centerline{\includegraphics{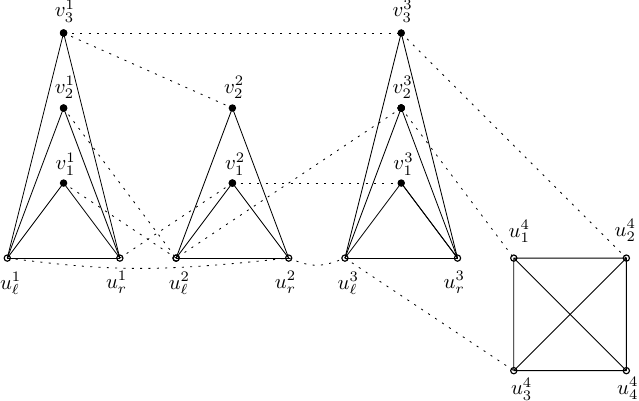}}
\caption{An example of a good  bowtie-free graph.}
\label{fig:bowtie-example}
\end{figure}

It follows that bowtie-free graphs can be extended to good bowtie-free graphs that are made of chimneys and $K_4$'s (forming the edge set $E_0$) and a triangle free graph (with the edge set $E_1$). 
An example of a good bowtie-free graph is depicted in Figure~\ref{fig:bowtie-example}. Type 0 edges are depicted as solid lines,
type 1 edges are dashed. We will use this graph as our reference graph through the paper.

\section{(Ultra)Homogenisation of $\B$}
\label{sec:homogenization}
{\em Homogenisation} is a technique which provides an  ultrahomogeneous lift  for a  non-ultrahomogeneous structure.
The special structure of good  bowtie-free graphs indicates that we have vertices of various types and that the Ramsey lift will 
have to be defined carefully (to distinguish all possible combination of types). In this section we shall define three  lifts (with languages $L_0$, $L_1$ and 
$L_2$)  and use them to define an amalgamation class (see Corollary~\ref{cor:ultrahomogeneous}).
We start with the definition of centre.

 Let $G$ be a good bowtie-free graph.  Then the {\em centre} of $G$, $c(G)$, is a subgraph
 induced by all vertices contained in two or more triangles.
The {\em centre of a vertex} $v$, denoted by $c(v)$, is a subgraph of $G$ induced by all vertices
in two or more triangles which are in the same connectivity component of $(V, E_0)$ as the vertex $v$. (We define centre for good bowtie-free graphs only, so every vertex has a centre.)  According to Lemma \ref{lem:chimneys}, the centre of a vertex is either an edge (if $v$ is
contained in a chimney) or $K_4$ (if $v$ is contained in a copy of $K_4$).
We also call a vertex {\em central} if it appears in centre. Other vertices are {\em non-central}.

\begin{remark}
Note that in the language of model theory the centre of a vertex is a definable set and thus bowtie-free graphs have a nontrivial algebraic closure (and as explained above this was one of the motivations for a study of this particular example, see e.g. \cite{Cherlin2011}).
\end{remark}

\begin{example}
Our reference graph depicted in Figure~\ref{fig:bowtie-example} has central vertices labelled $u$ and non-central $v$.
There are 4 centres of vertices: $\{u^1_\ell, u^1_r\}$,  $\{u^2_\ell, u^2_r\}$,  $\{u^3_\ell, u^3_r\}$, and $\{u^4_1,u^4_2,u^4_3,u^4_4\}$.
The centre of vertex $v^1_1$ is $\{u^1_\ell, u^1_r\}$.  The centre of $u^4_1$ is $\{u^4_1,u^4_2,u^4_3,u^4_4\}$.
\end{example}

We start with the following (easy and optimistic) statement:

\begin{lem}[Central amalgamation]
\label{lem:amalgamation}
Let $G$ and $G'$ be good  bowtie-free graphs and $f$ an isomorphism from $c(G)$ to $c(G')$.
Then the free amalgamation of $G$ and $G'$ over central vertices (with respect to $f$) is a good bowtie-free 
graph.
\end{lem}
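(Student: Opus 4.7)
The plan is to verify the two defining properties of a good bowtie-free graph separately for the free amalgam $H$ of $G$ and $G'$ over the centres (with respect to $f$). The goodness part is immediate: every vertex of $H$ lies in $V(G)$ or $V(G')$, was already contained in some chimney or $K_4$ on that side, and this local structure persists as a subgraph of $H$ (any enlargement, e.g.\ the merging of two chimneys at a common edge in the centre, still yields a chimney).

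For bowtie-freeness, the key structural observation I would establish first is that every triangle of $H$ lies entirely in $G$ or entirely in $G'$. Indeed, by the definition of free amalgamation there are no edges between $V(G)\setminus c(G)$ and $V(G')\setminus c(G')$, so a triangle cannot bridge the two sides outside the common centre. Now suppose for contradiction that $H$ contains a bowtie with central vertex $v$ and triangles $T_1,T_2$ meeting only in $v$. Since $G$ and $G'$ are bowtie-free and each triangle lies entirely in one side, up to symmetry $T_1\subseteq G$ and $T_2\subseteq G'$, which forces $v\in c(G)$ (identified with $c(G')$ via $f$). By Lemma~\ref{lem:chimneys}, $v$ lies in a unique connected component of $G_0$ that is a chimney or a copy of $K_4$; as every edge of $T_1$ is of type~$0$, $T_1$ is contained in this component, and analogously $T_2$ is contained in the corresponding component of $G'_0$.

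The crux is then that $f$ identifies the vertex-centre of $v$ in $G$ with the vertex-centre of $v$ in $G'$: in the chimney case this makes the ``common-edge partner'' $u$ of $v$ coincide on both sides, so both $T_1$ and $T_2$ must contain $u$, giving $u\in T_1\cap T_2$; in the $K_4$ case, $T_1$ and $T_2$ are two of the three triangles through $v$ inside a single $4$-vertex clique, and any two of them share a further vertex by pigeonhole. Either way $T_1\cap T_2$ properly contains $\{v\}$, contradicting the bowtie. The main delicate point of the argument is exactly this alignment of vertex-centres across $f$: it is what rules out the only potentially dangerous ``mixed'' bowtie configuration, and it is handled via the classification of centres provided by Lemma~\ref{lem:chimneys}, which identifies vertex-centres intrinsically as the blocks of type-$0$ edges inside the centre.
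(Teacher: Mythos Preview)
Your argument is correct. The route differs from the paper's: you argue by direct contradiction, assuming a bowtie in $H$ and using the vertex-centre structure to force the two triangles to share a second vertex, whereas the paper first describes the type-$0$ skeleton of the amalgam explicitly (two chimneys with the same centre edge merge into a single longer chimney, copies of $K_4$ are identified) and then concludes bowtie-freeness via Lemma~\ref{lem:bowtiefree}. Both proofs rest on the same key observation that every triangle of the amalgam already lies in $G$ or in $G'$; yours is more hands-on and avoids the auxiliary lemma, while the paper's yields the extra structural information that $(H)_0$ is again a disjoint union of chimneys and $K_4$'s.

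One small remark on your last sentence: the claim that vertex-centres are \emph{intrinsically} recognisable inside $c(G)$ as the type-$0$ blocks presupposes that the $E_0/E_1$ labelling is part of the data carried by $c(G)$. As a bare graph this labelling need not be recoverable (for instance when $c(G)$ is a $4$-cycle consisting of two chimney-centre edges and two $E_1$ edges, the two perfect matchings are indistinguishable), so the alignment of vertex-centres under $f$ is really an assumption on $f$ rather than a consequence of its being a graph isomorphism. The paper makes the same tacit reading, and in every application the isomorphism comes equipped with the $L_1$/$L_2$ data that pins down vertex-centres, so this is an informality in the statement rather than a defect in your proof.
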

\begin{figure}
\centerline{\includegraphics{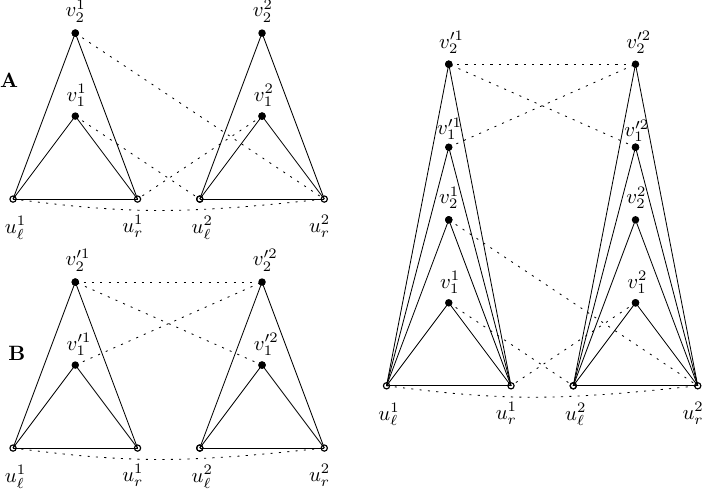}}
\caption{Structures $\str{A}$ and $\str{B}$ and their amalgamation over the common centre.}
\label{fig:amalgamation}
\end{figure}

\begin{proof}
Without loss of generality we can assume that $f$ is an identity and vertex
sets of $G=(V,E)$ and $G'=(V',E')$ intersect only on vertices of $c(G)$.  The free
amalgamation is a graph $G''=(V\cup V',E\cup E')$.

All triangles of $G''$ are clearly either triangles in $G$ or $G'$ (or both).
$G''$ is good because all copies of $K_4$ are also part of centres and thus
identified.  For vertices contained in chimneys, some vertices of chimney $Ch_n$ of $G$ gets
identified with some vertices of chimney $Ch_m$ of $G'$ if an only if centres of the chimneys are the 
same.  This produces a chimney $Ch_{n+m}$ in $G''$.
$G''$ is bowtie-free by Lemma \ref{lem:bowtiefree}.
\end{proof}

An example of the central amalgamation is depicted in Figure~\ref{fig:amalgamation}.

\begin{defn}[Lift $L_0$]
\label{def:order}
Given a good bowtie-free graph $G=(V, E)$, an {\em ordered good bowtie-free graph} is a structure
$\str{G}=(V,\rel{G}{E_0},\rel{G}{E_1},\leq_\str{G})$ where
$\rel{G}{E_0}=E_0(G)$, $\rel{G}{E_1}=E_1(G)$ and $\leq_\str{G}$ is a linear
order of $V$ such that
\begin{enumerate}
\item vertices of every centre of every vertex $v\in G$ form an interval  of $\leq_\str{G}$,
\item all centres of chimneys are before vertices in copies of $K_4$, 
\item all central vertices are before non-central vertices, and
\item 
non-central vertices belonging
to a given chimney form an interval. 
The relative order of these intervals
corresponding to given centres follows the order of the relative order of the centres.
\end{enumerate}

Such ordering is called an {\em admissible ordering}.
We denote by $L_0$ the language of ordered good bowtie-free graphs and by $\B_0$ the class
of all ordered good bowtie-free  graphs. By an abuse of notation, for a good bowtie-free graph $G$ we also denote $\str{G} = L_0(G)$ the corresponding ordered good bowtie-free graph (i.e.  $\str{G}$ is an $L_0$-lift of $G$). 
\end{defn}

\begin{example}
One of admissible orderings of our reference graph in Figure~\ref{fig:bowtie-example} is:
$u^1_\ell$, $u^1_r$, $u^2_\ell$, $u^2_r$, $u^3_\ell$, $u^3_r$, $u^4_1$, $u^4_2$, $u^4_3$, $u^4_4$, $v^1_1$, $v^1_2$, $v^1_3$, 
$v^2_1$, $v^2_2$,  $v^3_1$, $v^3_2$, $v^3_3$.
There are four centres of a vertex in the graph: $\{u^1_\ell, u^1_r\}$,  $\{u^2_\ell, u^2_r\}$,  $\{u^3_\ell, u^3_r\}$, and $\{u^4_1, u^4_2, u^4_3, u^4_4,\}$ each
of them forms an interval (to satisfy 1). $\{u^4_1, u^4_2, u^4_3, u^4_4,\}$ is after all centres of vertices in a chimney (to satisfy 2). The non-central vertices
associated with each chimney forms an interval: $\{v^1_1, v^1_2, v^1_3\}$, $\{v^2_1, v^2_2\}$, and $\{v^3_1, v^3_2, v^3_3\}$ and their relative order corresponds to the order of their centres, as required by 4.
\end{example}

We introduce two more lifts of good bowtie-free graphs. The 
lift $L_1$ is introducing unary relations and the lift $L_2$ in addition binary relations. It is $L_0\subset L_1 \subset L_2$.
The hereditary class defined by the lift $L_2$  will form our Ramsey class.

\begin{defn}[Lift $L_1$]
Let $\str{G}$ be an ordered good bowtie-free graph.  $\str{A}=L_1(\str{G})$ is a
lift of $\str{G}$ adding new unary relations $\rel{A}{\ell}$, $\rel{A}{r}$, $\rel{A}{1}$,
$\rel{A}{2}$, $\rel{A}{3}$ and $\rel{A}{4}$ such that:
\begin{enumerate}
 \item for every pair $u,v$ forming the centre of a chimney of $\str{G}$, $u<_\str{G} v$,
we put $(u)\in\rel{A}{\ell}$ and $(v)\in\rel{A}{r}$;
 \item for every $a<_\str{G} b<_\str{G} c<_\str{G} d$ that are vertices of a copy of $K_4$ in $\str{G}$
we put $(a)\in\rel{A}{1}$, $(b)\in\rel{A}{2}$, $(c)\in\rel{A}{3}$, $(d)\in\rel{A}{4}$.
\end{enumerate}
We denote by $L_1$ the language of this lift. For a given $\str{G} \in \B_0$ we denote by $L_1(\str{G})$ the corresponding lift of $\str{G}$.
By $\B_1$ we denote the class of all structures $L_1(\str{G})$, $\str{G}\in \B_0$.
\end{defn}

\begin{example}
The unary relations of the $L_1$-lifts of our reference graph in Figure~\ref{fig:bowtie-example} 
are indicated by labels of the $u$ vertices.
\end{example}

Advancing the definition of  the lift $L_2$ we first note that we shall sometimes consider {\em rooted} structures (with 
either one or two {\em roots}).
Isomorphisms (and embeddings) are, of course, defined as root preserving isomorphisms (and embeddings).
If, for example, the structures $\str{G}$ and $\str{G}'$ are considered with roots $u$ and $v$  and $u'$ and $v'$   then these structures are called isomorphic if there is an isomorphism $f$ from  $\str{G}$ to $\str{G}'$  such that $f(u) = u'$  and $f(v) = v'$.  

Given ordered good bowtie-free graph $\str{G}$ and two vertices $u$, $v$, $u\neq v$, we denote by $t(u,v)$ the isomorphism type of the structure
induced by $L_1(\str{G})$ on the set $\{u,v\}\cup c(u) \cup c(v)$ rooted in $(u,v)$.
We fix an enumeration $t_1,t_2,\ldots, t_N$ of all the possible such types. Clearly there are only finitely many possibilities for types (as $t(u,v)$ is an isomorphism type of a graph with at most $8$ vertices).
 In this situation we define binary relations $\rel{}{t_1},\rel{}{t_2},\ldots, \rel{}{t_N}$ as follows:

\begin{defn}[Homogenising lift $L_2$]
\label{defn:L2}
Let $\str{G} \in \B_0$ be an ordered good bowtie-free graph.  $\str{A}=L_2(\str{G})$ is the
lift consisting from  $L_1$-struc\-ture $L_1(\str{G})$ and in addition  from new binary relations
$\rel{A}{t_1},\rel{A}{t_2},\ldots, \rel{A}{t_N}$. For $u<_\str{G} v$ we put $(u,v)\in \rel{A}{t(u,v)}$.
We denote by $L_2$ the language of this lift and by $\B_2$ the class of all structures
$L_2(\str{G})$, $\str{G}\in \B_0$.

If $(u,v)\in \rel{A}{t(u,v)}$ then $t(u,v)$ is called the {\em type} of pair $(u,v)$.
\end{defn}
Denote by $\mathcal B_2$ the class of all $L_2$-lifts $L_2(\str{G})$, $\str{G}\in \B_0$. 
Observe that structures in $\mathcal B_2$ have the property that every two distinct vertices are in a tuple of some relation (called irreducible below).

\begin{example}
\begin{figure}
\centerline{\includegraphics{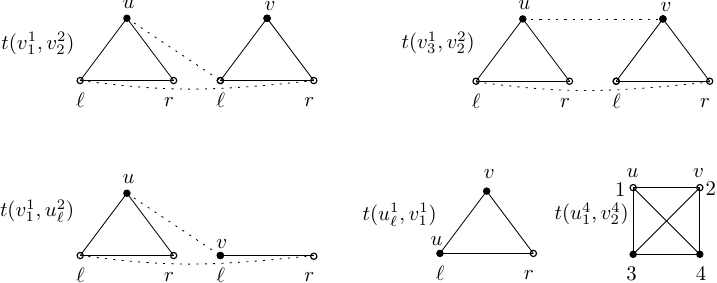}}
\caption{Examples of types of pairs appearing in our reference graph depicted in Figure~\ref{fig:bowtie-example}. (With the notation for vertices preserved.)}
\label{fig:types}
\end{figure}
Some types of pairs in our reference graph depicted in Figure~\ref{fig:bowtie-example} are depicted in Figure~\ref{fig:types}.
\end{example}
\begin{remark}
The relations $\rel{A}{t_i}$ of the lift $L_2(\str{G})$ form a natural homogenisation of ordered good bowtie-free graphs
as the new binary relations introduced describe necessary orbits of the
automorphism group of a universal graph for class $\B_1$.  On the other hand
not all relations in lift $L_1(\str{G})$ (i.e. unary relations) are necessary from the point of view
of $\omega$-categoricity.  The universal graph in~\cite{Cherlin1999} has automorphisms exchanging vertices within 
centres of vertices.  We however consider ordered graphs and the order on every 
centre prevents any non-trivial automorphism within it.  It is also interesting to
observe that~\cite{Cherlin1999} leads to homogenisation of the existentially complete universal graph which needs relations of unbounded arity.  Our ordered lift has only unary
and binary relations. This is in agreement with \cite{Hartman2014} where we show that the relational
complexity of $\omega$-categorical and existentially complete bowtie-free graph
is infinite, while the relational
complexity of the ordered $\omega$-categorical and existentially complete bowtie-free graph
is 2.
\end{remark}

\begin{defn}
\label{def:L2}
Denote by $\overline{\B}$ the class of all substructures of $\B_2$. (Thus $\overline{\B}$ is the hereditary closure of $\B_2$.) 
For structure $\str{A}\in \overline{\B}$ an ordered good bowtie-free graph
$\str{G}\in \B_0$ is called a {\em witness} of $\str{A}$ if $\str{A}$ is induced on
$A$ by $L_2(\str{G})$.
\end{defn}
It follows directly that $\str{A}\in \overline{\B}$ if and only if there exists a witness $\str{G}$ of $\str{A}$.

The lift $L_2(\str{G})$ encodes enough information so that for every
substructure $\str{A}$ of $L_2(\str{G})$ it is possible
to uniquely reconstruct the type of its centre (a precise procedure for this appears in proof of Theorem~\ref{thm:triangles}). Lemma \ref{lem:amalgamation} extends to the amalgamation property of $\overline{\B}$:
\begin{lem}[Amalgamation of lifts]
\label{lem:amalg2}
$\overline{\B}$ is an amalgamation class.  
\end{lem}
\begin{proof}
Fix $\str{A}$, $\str{B}_1$ and $\str{B}_2$ from $\overline{\B}$ such
that identity is an embedding from $\str{A}$ to $\str{B}_1$ and $\str{B}_2$.
We will construct an amalgamation of $\str{B}_1$ and $\str{B}_2$ over $\str{A}$.

Let $\str{G}_{\str{B}_1}$ and $\str{G}_{\str{B}_2}$ be witnesses of
$\str{B}_1$ and $\str{B}_2$ respectively.
 Further denote by $\str{A}_1$ the structure
induced by $\str{G}_{\str{B}_1}$ on the union of $A$ and all centres
of vertices of $A$ in $\str{G}_{\str{B}_1}$.
Similarly denote by $\str{A}_2$ the structure
induced by $\str{G}_{\str{B}_2}$ on the union of $A$ and all 
centres of vertices of $A$ in $\str{G}_{\str{B}_2}$. 

By the construction of the lift, $\str{A}_1$ is isomorphic to $\str{A}_2$
and moreover there is an isomorphism that is an identity
on $A$. 
It is now possible to extend
$\str{G}_{\str{B}_1}$ to $\str{G}'_{\str{B}_1}$ and $\str{G}_{\str{B}_2}$
to $\str{G}'_{\str{B}_2}$ (by possibly adding  centres) in a way that centres $c(\str{G}'_{\str{B}_1})$ and
$c(\str{G}'_{\str{B}_2})$ are isomorphic with fixing vertices of
$\str{A}$.

By the same argument as in proof of Lemma \ref{lem:amalgamation} we get ordered good bowtie-free graph
$\str{G}_\str{D}$ that is an amalgamation of  $\str{G}'_{\str{B}_1}$ and
$\str{G}'_{\str{B}_2}$ over $\str{A}_1=\str{A}_2$ (here we may unify non-central vertices in $\str{A}$, too).  It is easy to verify
that $L_2(\str{G}_\str{D})$ is as well an amalgamation of $\str{B}_1$ and
$\str{B}_2$ over $\str{A}$, since the type of every pair of vertices in
$\str{B}_1$ or $\str{B}_2$ is preserved and thus $\str{B}_1$ and
$\str{B}_2$ are induced substructures of $L_2(\str{G}_\str{D})$.
\end{proof}

Thus we may apply \Fraisse{}-theory (see Section~\ref{remarks}). To obtain Ramsey property we
apply strong Ramsey properties proved in \cite{Nevsetvril1977b}. Towards this end we need the following alternative description of $\overline{\B}$ by means of forbidden substructures.

Recall that a structure $\str{A}$ is called {\em irreducible} if every pair of distinct vertices belong to a relation of $\str{A}$.
In the context of Ramsey theory it is often convenient to consider the lift
adding linear order alone.
Let $K$ be a language and $K^\leq$ lifted language adding single binary relation $\leq$.
$K^\leq$-structure $\str{A}$ is {\em pure-irreducible} if its $K$-shadow is irreducible.

We will apply notion of pure-irreducibility to structures in $\str{A}\in \overline{\mathcal {B}}$.
While the linear order is present in $\sh(\str{A})$ implicitly this will
allow us to describe class $\overline{\mathcal {B}}$ by means of forbidden pure-irreducible
substructures.

Given family of finite $L$-structures $\F$ we denote by $\Forb(\F)$ the class of finite $L$-structures not containing any structure $\str{F}\in \F$ as a substructure.
We sometimes write $\Forb_L(\F)$ to denote explicitly the language $L$ of structures we are considering.

\begin{thm}
\label{thm:triangles}
Let $\mathcal T$ be the class of all pure-irreducible $L_2$-structures $\str{F}$ with at most 3 vertices such that $\str{F}\notin {\overline \B}$.
Then\begin{enumerate}
 \item every pure-irreducible structure in $\Forb_{L_2}(\mathcal T)$ is lift of a bowtie-free graph, and,
 \item the class $\overline {\mathcal B}$ is precisely the class of all finite pure-irreducible structures $\str{A}$ in $\Forb_{L_2}(\mathcal T)$ such that $\leq_\str{A}$ is
 an admissible ordering.
\end{enumerate}
\end{thm}

\begin{proof}
Consider structure $\str{A}\in \overline{\B}$.  It easily follows from
Definition \ref{def:L2} that every pair of vertices $(u,v)$, $u\leq_\str{A}
v$, is in some binary relation $\rel{A}{t_j}$ and thus $\overline{\B}$ is
a class of pure-irreducible structures.

Because $\overline{\B}$ is closed on  substructures
it remains to show that every pure-irreducible structure $\str{A}\notin \overline{\B}$
contains substructure $\str{A}'\notin \overline{\B}$ that consists of at
most 3 vertices.

We give an effective procedure that attempts to construct, for a given ordered structure $\str{A}$, an ordered good
bowtie-free graph (a witness) $\str{G}$  such that
$\str{A}$ is an induced substructure of $L_2(\str{G})$.
The existence of the witness $\str{G}$ proves that $\str{A}\in \overline{\B}$.
Then we analyse cases where such procedure fails and show that
these failures all correspond to structures $\str{F}$ on at most 3~vertices.
All those structures will have property that $\str{F}\notin \overline{\B}$.

Denote by $\str{A}^0$ the $L_1$-shadow of $\str{A}$.  Enumerate all
pairs of vertices $u,v$, $u<_\str{A} v$ in $A$ as $(u_1,v_1),(u_2,v_2),\ldots, (u_n,v_n)$.
For every pair $(u_i,v_i)$, $1\leq i\leq n$, we construct $\str{A}^i$
inductively from $\str{A}^{i-1}$ based on the type of $(u_i,v_i)$ in
$\str{A}$.  This involves the following elementary steps:
\begin{enumerate}
\item addition of new vertices to represent centres of $u$ and $v$ if they are not already present in $\str{A}^{i-1}$,
\item addition of the new vertices into the corresponding unary relations $\nbrel{\str A^i}{\ell}$, $\nbrel{\str A^i}{r}$, $\nbrel{\str A^i}{1}$,
$\nbrel{\str A^i}{2}$, $\nbrel{\str A^i}{3}$, and, $\nbrel{\str A^i}{4}$ as required by the type,
\item addition of new edges of type 0 or 1 from $u$ and $v$ to the newly added vertices,
\item addition of edges of type 0 or 1 between the newly added vertices,
\item extension of the linear order $\leq_{\str{A}^{i-1}}$ in a way consistent
with the definition of ordered good bowtie-free graphs (Definition~\ref{def:order}) and the type of the pair $(u,v)$.
\end{enumerate}
Because vertices of centres of a given vertex $v$ are uniquely determined by
the unary relations in $L_1$, there is (up to isomorphism) unique way of doing so (if it exist at all).

This procedure may fail if the extension is impossible. Assume that $(u_i,v_i)$
is the first pair such that $\str{A}^i$ can not be constructed. We consider
individual cases that may happen and show that such failure scenarios all imply
existence of forbidden substructures in $\str{A}$ with at most 3 vertices:
\begin{itemize}
 \item[1.] Some or all vertices of the centre of $u_i$ already exists in
$\str{A}^{i-1}$ and they are in conflict with the centre required by the
type of pair $(u_i,v_i)$.

For example there is a vertex $u'$ connected by edge of type 0 to $u_i$
that is in $\rel{A}{\ell}$ while the centre of $u_i$ required is a copy of $K_4$ and thus
the vertex should be in $\rel{A}{1}$, $\rel{A}{2}$, $\rel{A}{3}$ or $\rel{A}{4}$ instead.

In this case let $u'$ be such vertex. If $u'$ is in $A$ then the structure induced
on $u_i,v_i,u'$ must be forbidden: pair $(u_i,v_i)$ require $u_i$ to have its centre of one
type, while pair $(u_i,u')$ require its centre of a different type (or if $u_i=u'$ then
unary the relation on $u_i$ must be already in conflict).  This is not possible in a
structure in $\overline{\B}$.

If $u'$ is not in $A$ then it was introduced when defining the centre of vertex
$u''$ and then $u_i,v_i,u''$ induce the forbidden substructure for the same reason.

 \item[2.] The centre of $v_i$ is already present in the structure and differs from one required by the type.

This case follows in complete analogy to 1.

 \item[3.] Vertices $u_i$ and $v_i$ are connected or ordered differently than
required by the type. 
In this case the structure induced on $u_i,v_i$ is forbidden.

 \item[4.] Edges or orders in between already defined parts of centres $u_i$ and
$v_i$ are different then required by the type. 

Denote by $u'$ and $v'$ the
conflicting vertices of the centre of $u_i$ and $v_i$ respectively.  Now put $u''=u'$
if $u'\in A$ or put $u''$ to be a vertex of $A$ whose centre contains $u'$.
Similarly put $v''=v_i$ if $v'\in A$ or $v''$ to a vertex of $A$ whose centre
contains $v'$.  Now structure induced on $u_i,v_i,u'',v''$ is forbidden and
moreover at least one of $u_i,v_i,u''$ or $u'',v_i,v''$ must be forbidden.
\end{itemize}
We have shown that the procedure of adding centres can always be completed for all pairs $(u_i, v_i), i = 1,2,\ldots,n$  if
all substructures of $\str{A}$ on at most 3 vertices are in $\overline{\B}$. It also follows that if $\str{A}^i\in \Forb(\mathcal T)$ then also $\str{A}^{i+1}\in \Forb(\mathcal T)$ for every $1\leq i<n$.
Denote by $\str{G}^n$ the resulting ordered graph (i.e. $L_0$ shadow of $\str{A}^n$).

In the final step we construct $\str{G}$ by extending every triangle in $\str{G}^n$
that is not contained in a chimney nor $K_4$ and every edge of type 0 not contained in a triangle to a copy of chimney $Ch_2$.  There is unique
way of doing so that is consistent with lift $\str{A}^n$:
\begin{enumerate}
\item Every isolated triangle in $\str{A}^n$
must contain precisely one vertex $v_\ell$ in relation $\rel{A}{\ell}$, one vertex $v_r$ in relation $\rel{A}{r}$ and one vertex $v_t$ in no unary relations.
(Any other triangle is either forbidden by $\mathcal T$ or contains vertices in relations
 $\rel{A}{1}$, $\rel{A}{2}$, $\rel{A}{3}$ or $\rel{A}{4}$ and those was extended
to copies of $K_4$ during the construction of $\str{G}^n$.) The extension of such triangle
can thus only be done by adding a new vertex $v$ adjacent to both $v_\ell$ and $v_r$.
The order of $\str{G}$ can be extended by putting $v$ just after $v_t$ to satisfy Definition \ref{def:order} condition 4. (this is the only step of the construction that is not unique and it would be also possible to put $v$ before $v_t)$.
\item Every edge of type 0 has precisely one vertex  $v_\ell$ in relation $\rel{A}{\ell}$ and one vertex $v_r$ in relation $\rel{A}{r}$ (again any other edge of type 0 must have been extended to a copy of $K_4$). There is only one way to extend the order which is consistent with Definition \ref{def:order} condition 4.
\end{enumerate}

We shall verify that $\str{G}$ is an ordered good bowtie-free graph.  While constructing $\str{A}^n$ we made sure that
every vertex is contained in at least one edge of type 0. While constructing $\str{G}$ we made sure that
every such edge is contained either in a chimney or $K_4$. Because $\str{A}^n\in \Forb(\mathcal T)$ and only edges contained in triangles were added to $\str{G}$ we know
that all edges contained in a triangle are of type 0 and the subgraph formed by edges of type 0 is a disjoint union
of chimneys and $K_4$'s.

By Lemma \ref{lem:bowtiefree} we get that $\str{G}$ is a bowtie-free graph. By the construction
$\str{G}$ is a witness of $\str{A}^n$ and thus also of $\str{A}$.

It follows that we can characterise lifts $\str{A}$ such that there exists
$\str{G}$ (described above) that is an ordered good bowtie-free graph.
Because $\str{A}$ is a substructure of $L_2(\str{G})$ (and thus
$\str{A}\in \overline{\B}$) the statement follows.
\end{proof}

\section{Reduced structures}
\label{sec:reduced}
To simplify our future analysis, we now invoke another modification of $L_2$-struc\-tures. It is easy to see that
the $L_2$-lift was created in such a  way that  all edges in between two centres of a vertex, $C_1$ and $C_2$, are in fact encoded by type of any pair of vertices $v_1\in C_1$ and $v_2\in C_2$ which is explicitly represented in $L_2$-lifts.  We can thus safely omit all but one vertex from every centre of a vertex without losing
any information about a good bowtie-free $L_2$-struc\-ture:
\begin{defn}[Reduced structures]
$\str{A}^\bullet$ is a {\em reduction} of $L_2$-struc\-ture $\str{A}$
if it is created from $\str{A}$ by removing all vertices $v\in \rel{A}{r},\rel{A}{2},\rel{A}{3},\rel{A}{4}$.  
\end{defn}
 We modify the language $L_2$
correspondingly into $L^\bullet$ (actually we may take $L^\bullet=L_2$ but relations $\rel{}{r},\rel{}{2},\rel{}{3},\rel{}{4}$ are always empty for reduced structures and thus we remove them from the language) and denote by $\B^\bullet$ the class of all reduced
structures $\str{A}^\bullet$ where $\str{A}\in \B_2$ and by $\overline{\B}^\bullet$ for all reduced structures $\str{A}^\bullet$ where $\str{A}\in \overline{\B}$. Accordingly we modify the other definitions (such as the definition of pure-irreducible structures).

By comparing the corresponding definitions we have that reduced  structures are
still described by a set of forbidden substructures with at most 3 vertices (in a
sense of Theorem~\ref{thm:triangles}):
\begin{thm}
\label{thm:triangles2}
$\overline{\mathcal B}^\bullet$ is the class of all finite admissibly ordered pure-irreducible structures in $\Forb_{L^\bullet}(\mathcal T^\bullet)$ where $\mathcal T^\bullet$ is a finite set of pure-irreducible structures with at most 3 vertices.
\end{thm}
\begin{proof}
$\mathcal T^\bullet$ is the subset of $\mathcal T$ defined in Theorem~\ref{thm:triangles} containing $L^\bullet$ shadows of all structures $\str{A}\in \mathcal T$
such that all relations $\rel{A}{r}$, $\rel{A}{2}$, $\rel{A}{3}$, $\rel{A}{4}$ are empty.
\end{proof}

\section{Ramsey structures}
\label{sec:ramsey}

The following strong Ramsey theorem is a variant of the main result of \cite{Nevsetvril1977b}, see also \cite{Nevsetvril1995}. It will be used repeatedly (for example in Sections \ref{sec:star}  and \ref{sec:final}). 
\begin{thm}[\cite{Nevsetvril1977b}]
\label{thm:NR0}
Let $K$  be a finite relational language involving binary relation $\leq$ and $\F$ be a set of pure-irreducible $K$-struc\-tures. Then the class of all structures in $\Forb(\F)$ where $\leq$ is a linear ordering of vertices is a Ramsey class.
\end{thm}
Note that traditionally Theorem~\ref{thm:NR0} is formulated in the language of linearly ordered hypergraphs. Because of reducibility of linearly ordered hypergraph is concerned about hyperedges only, it corresponds to our notion of pure-irreducibility.
We will use the following refinement of Theorem~\ref{thm:NR0}.

\begin{thm}[\cite{Nevsetvril1977b}]
\label{thm:NR}
Let $K$  be a finite relational language involving binary relation $\leq$ and unary relations $U_1, U_2,\ldots, U_N$.
Let $\F$ be a set of pure-irreducible $K$-struc\-tures. Let $\C$ be the class of all $K$-struc\-tures $\str A$ in $\Forb(\F)$
where every vertex is in exactly one of unary relations $\rel{A}{U_i}$ and
$\leq_{\str{A}}$ is linear ordering of vertices which satisfies 
$$
x < y  \hbox { whenever }  (x)\in \rel{A}{U_i} \hbox { and } (y)\in \rel{A}{U_j} \hbox { and } 1\leq i<j\leq N.
$$
We call such ordering an {\em admissible ordering}.
Then the class $\mathcal C$ together with admissible orderings is a Ramsey class.
\end{thm}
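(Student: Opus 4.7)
The plan is to prove this by the Nešetřil--Rödl \emph{partite construction}, adapted so that the unary predicates $U_1,\dots,U_N$ and the induced admissibility of orderings are preserved at every stage. Fix $\relsys{A},\relsys{B}\in\C$ with admissible orderings, fix $k$ colours, and let $n=|B|$. The aim is to build $\relsys{C}\in\Forb(\F)$ with an admissible ordering such that $\relsys{C}\longrightarrow(\relsys{B})^{\relsys{A}}_k$.

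First I would set up the \emph{partite framework}. An $n$-partite $L$-structure is one whose vertex set is partitioned into labelled classes $V_1,\dots,V_n$ indexed by the vertices of $\relsys{B}$, where each $v\in V_j$ carries exactly the unary predicates of the $j$-th vertex of $\relsys{B}$ and every tuple in a non-unary relation is ``transversal-compatible'' with some tuple in $\relsys{B}$. A \emph{partite copy} of $\relsys{B}$ is a transversal copy hitting each $V_j$ exactly once. Because the vertices of a partite copy inherit their unary predicates from $\relsys{B}$, any linear ordering that first orders the classes $V_1,\dots,V_n$ in the same block-order as the vertices of $\relsys{B}$, and then orders each class internally, is automatically admissible; this takes care of the unary-ordering constraint throughout the construction.

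Next, the \emph{partite lemma}: given any finite partite structure $\relsys{P}_0$ (which we take to be the single ``picture'' of a copy of $\relsys{B}$) and any $k$, there exists a partite $\relsys{Q}_0$ such that for every $k$-colouring of the partite-transversal copies of $\relsys{A}$ in $\relsys{Q}_0$ there is a partite copy of $\relsys{P}_0$ in $\relsys{Q}_0$ all of whose transversal $\relsys{A}$-copies receive a single colour. This is proved by iterating the Hales--Jewett theorem class-by-class (or equivalently by Graham--Rothschild in parameter words), since transversal copies of $\relsys{A}$ are controlled purely by the choice of one vertex from each of the $|A|$ relevant classes. The iteration stays within $\Forb(\F)$ because only new vertices (never new relational tuples spanning across classes) are introduced.

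The \emph{partite construction} then proceeds by induction on the enumeration $\widetilde{\relsys{A}}_1,\dots,\widetilde{\relsys{A}}_m$ of the copies of $\relsys{A}$ in $\relsys{B}$: starting from a sufficiently large partite base, one builds $\relsys{P}_1,\dots,\relsys{P}_m$ by free partite amalgamation, applying the partite lemma at each step to absorb one copy $\widetilde{\relsys{A}}_i$, so that after step $i$ every $k$-colouring on partite $\widetilde{\relsys{A}}_i$-copies is monochromatic on some partite copy of $\relsys{B}$. At stage $m$ the resulting partite $\relsys{P}_m$, read as an ordinary $L$-structure with the admissible ordering described above, is the desired $\relsys{C}$. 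The main obstacle (and the place where the hypothesis bites) is verifying $\relsys{C}\in\Forb(\F)$: since every $\relsys{F}\in\F$ is \emph{complete}, every pair of its vertices belongs to some relation of $L$, so any embedding $\relsys{F}\hookrightarrow \relsys{P}_m$ cannot cross a free amalgamation seam (no new tuples are created across the seam). Hence such an embedding must lie inside a single factor, which by induction belongs to $\Forb(\F)$; completeness of $\F$ therefore passes through free amalgamation and through the partite lemma, so the whole construction remains in $\Forb(\F)$, yielding the Ramsey property for the class.
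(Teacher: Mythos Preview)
The paper does not actually prove this theorem: it is quoted as the main result of \cite{Nevsetvril1977b}, and the only comment is that the $N$-partite (admissibly ordered) version ``follows easily either from the (partite construction) proof, or directly by a product argument.'' So there is no detailed argument in the paper to compare against; your outline is already much more than the paper supplies.

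That said, your sketch of the partite construction has a structural slip that would make the argument fail as written. You index the partite systems by the vertices of $\relsys{B}$ and then iterate over the copies $\widetilde{\relsys{A}}_1,\dots,\widetilde{\relsys{A}}_m$ of $\relsys{A}$ \emph{in $\relsys{B}$}. After all $m$ steps this only yields, for any colouring, a transversal copy of $\relsys{B}$ in which the colour of a copy of $\relsys{A}$ depends solely on its projection $\widetilde{\relsys{A}}_i$; it does \emph{not} force these $m$ colours to coincide. The missing ingredient is an auxiliary structure $\relsys{C}_0$ with $\relsys{C}_0\longrightarrow(\relsys{B})^{\relsys{A}}_k$ (with no $\Forb(\F)$ requirement on $\relsys{C}_0$; e.g.\ take it from the ordered Ramsey theorem for the full class $\Rel(L)$). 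The partite systems should be $\relsys{C}_0$-partite, $\relsys{P}_0$ should be the disjoint union of one copy of $\relsys{B}$ for each embedding $\relsys{B}\hookrightarrow\relsys{C}_0$, and the iteration should run over the copies of $\relsys{A}$ in $\relsys{C}_0$. Then the partite construction reduces any colouring to one that factors through the projection to $\relsys{C}_0$, and the Ramsey property of $\relsys{C}_0$ finishes the job. (This is exactly how the paper itself runs the construction later, in the proof of Theorem~\ref{thm:mainII}.) Your treatment of the admissible ordering and of why completeness of the members of $\F$ makes $\Forb(\F)$ survive free amalgamation is fine; only the bootstrap via $\relsys{C}_0$ is missing.
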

\begin{proof}
In fact, this is an $N$-partite version of the main result of \cite{Nevsetvril1977b}. It follows directly by a product argument.
For completeness, this can be outlined as follows: Given admissibly ordered $\str{A},\str{B}\in \mathcal C$, by~\cite{Nevsetvril1977b} there exists $\str C\in\Forb(\F)$ with $\str C\longrightarrow (\str B)^\str A_2$. $\str{C}$ is ordered but this ordering $\leq_\str{C}$ may not have to be admissible. Then it is possible to re-order vertices of $\str{C}$ lexicographically first by unary relations they belong to and second by the original order of $\str{C}$. It is easy to see that this new order is admissible and preserves all copies of (admissibly ordered)  structure $\str{B}$.
\end{proof}
 We apply Theorem~\ref{thm:NR} to  the set $\mathcal T^\bullet$ of pure-irreducible $L^\bullet$-structures defined in Theorem  \ref{thm:triangles2}.
 We consider the structures in  $\Forb_{L^\bullet}(\mathcal T)$ with admissible orderings defined above.
 Theorem \ref{thm:NR} then specialises to the following:

\begin{thm}\label{mainI}
The class $\Forb_{L^\bullet}(\mathcal T^\bullet)$   is a Ramsey class. 

Explicitly: 
For every pair of $L^\bullet$-struc\-tures $\str{A}, \str{B}$ in  $\Forb_{L^\bullet}(\mathcal T^\bullet)$ there exists an
 $L^\bullet$-structure $\str{C} \in \Forb_{L^\bullet}(\mathcal T^\bullet)$  such that
$$
\str{C} \longrightarrow (\str{B})^{\str{A}}_2.
$$
\end{thm}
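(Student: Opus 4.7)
The plan is to deduce Theorem~\ref{mainI} almost directly from Theorem~\ref{thm:NR}, using the structural work already done in Sections~\ref{sec:bowtie}--\ref{sec:reduced}. The key observation that enables this is that passing from $L_2$ to the reduced language $L^\bullet$ collapses each vertex-centre to a single vertex: for a chimney-centre the unique surviving representative is the $\rel{}{L}$-vertex, for a $K_4$-centre it is the $\rel{}{1}$-vertex. Therefore, when $\relsys{A}^\bullet\in\B^\bullet$ is viewed with an admissible ordering in the sense of Definition~\ref{def:order}, the clause requiring each vertex-centre to form an interval is vacuous and the clause controlling the relative order of intervals is automatically satisfied. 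The only genuine remaining constraints on an admissible order are that $\rel{}{L}$-vertices precede $\rel{}{1}$-vertices, and that both precede the non-central vertices.

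Next I would massage the language so that the admissibility condition matches the unary-predicate-respecting condition in Theorem~\ref{thm:NR} verbatim. To capture the requirement that non-central vertices come last, I introduce a (definable) unary relation $\rel{}{N}$ marking precisely those vertices of a reduced structure lying in no other unary predicate; this changes neither the class nor the set of admissible orderings, since $\rel{}{N}$ is the complement of $\rel{}{L}\cup\rel{}{1}$ on the domain. With $U_1=\rel{}{L}$, $U_2=\rel{}{1}$, $U_3=\rel{}{N}$, the admissible orderings of Definition~\ref{def:order} become exactly the admissible orderings of Theorem~\ref{thm:NR}.

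Finally, by Theorem~\ref{thm:triangles} (in its $L^\bullet$-form stated at the end of Section~\ref{sec:reduced}) there is a finite set $\mathcal T$ of complete $L^\bullet$-structures, each on at most $3$ vertices, such that $\B^\bullet$ is the class of all complete $L^\bullet$-structures in $\Forb_{L^\bullet}(\mathcal T)$ equipped with admissible orderings. Since $L^\bullet$ is a finite relational language with the distinguished unary relations $U_1,U_2,U_3$, and $\mathcal T$ is a finite set of complete forbidden structures, all hypotheses of Theorem~\ref{thm:NR} are met; applying it yields, for any $\relsys{A},\relsys{B}\in\Forb_{L^\bullet}(\mathcal T)$, a structure $\relsys{C}\in\Forb_{L^\bullet}(\mathcal T)$ with $\relsys{C}\longrightarrow(\relsys{B})^{\relsys{A}}_2$. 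I do not expect a real obstacle here: the heavy lifting has already been carried out, namely showing in Theorem~\ref{thm:triangles} that $\overline{\B}$ (and hence $\B^\bullet$) can be presented as a $\Forb$-class of \emph{complete} structures forbidden on at most $3$ vertices. The only point to double-check is that the passage from $L_2$ to $L^\bullet$ preserves completeness of the forbidden structures, which is clear from the reduction (no pair of surviving vertices loses any binary relation).
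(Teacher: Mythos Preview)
Your approach is essentially the paper's own: Theorem~\ref{mainI} is proved there in one line, as a direct specialisation of Theorem~\ref{thm:NR}, and your proposal simply unpacks how the unary predicates $U_1=\rel{}{L}$, $U_2=\rel{}{1}$, $U_3=\rel{}{N}$ are set up so that the hypotheses of Theorem~\ref{thm:NR} apply.

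One small inaccuracy worth flagging: you claim that after reduction ``the admissible orderings of Definition~\ref{def:order} become exactly the admissible orderings of Theorem~\ref{thm:NR}''. This is not quite right. Definition~\ref{def:order} has an additional clause requiring that non-central vertices attached to a given centre form an interval, and that these intervals follow the order of the centres; this constraint survives the reduction (non-central vertices are not removed) and is \emph{not} implied by the unary-predicate ordering condition of Theorem~\ref{thm:NR}. Indeed, the paper itself acknowledges this in Section~\ref{sec:final}, step~3, where it notes that the ordering produced by Theorem~\ref{mainI} satisfies conditions 1--3 of Definition~\ref{def:order} but leaves the order of non-central vertices free, and must be repaired afterwards. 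For the purposes of Theorem~\ref{mainI} this does not matter: the class $\Forb_{L^\bullet}(\mathcal T)$ is taken with Theorem~\ref{thm:NR}-admissible orderings, and the repair to full Definition~\ref{def:order}-admissibility is deferred to the final completion argument. So your proof of Theorem~\ref{mainI} goes through; just drop the claim that the two notions of admissible ordering coincide.
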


\begin{proof}
Indeed this is just a specialisation of Theorem~\ref{thm:NR} (where admissible orderings are interpreted by orderings of chimneys and $K_4$'s and language is extended by additional unary relation containing precisely those vertices not in any of unary relations of $L^\bullet$)).
\end{proof}

However note that even when $\str{A}$ and $\str{B}$ are pure-irreducible structures in $\Forb_{L^\bullet}(\mathcal T^\bullet)$ the structure $\str{C}$ in  $\Forb_{L^\bullet}(\mathcal T^\bullet)$ is not necessarily pure-irreducible and thus it may not correspond to the reduction of $L_2$-lift of a good bowtie-free graph. (As there are forbidden configurations we cannot complete $\str{C}$ to a pure-irreducible structure ``freely''.) 

\section{Star Equivalences are Ramsey}
\label{sec:star}

The key feature of bowtie-free graphs is the partition to chimneys with each class of the partition ``rooted'' in the centre (the root being its algebraic closure).
In this section we prove Theorem~\ref{thm:mainII} which extends Theorem~\ref{mainI} to structures with such ``rooted equivalences''. This brings us closer to the main result (which is proved in the next section).

\begin{defn}[Chimney equivalence]
For a $L^\bullet$-struc\-ture $\str{A} \in {\B^\bullet}$ (i.e. which is the reduction of the $L_2$-lift of a good bowtie-free graph)  denote by $\sim_{\str{A}}$ the equivalence expressing that two vertices belong to the same chimney (contracted central vertices are included in this). 
$\sim_{\str{A}}$  is called the {\em chimney equivalence} of $\str{A}$.
\end{defn}

Note that each equivalence class of $\sim_{\str{A}}$ contains a distinguished vertex $x$ which is the (reduced) centre of the corresponding chimney or a copy of $K_4$. Moreover all other vertices of this equivalence class are related to $x$ by edges belonging to 
$\rel{A}{E_0}$ that corresponds to a (spanning) star  and there are no other vertices joined to $x$ by $\rel{A}{E_0}$ edges.
Thus the equivalence $\sim_{\str{A}}$ is described by a star forest formed by $\rel{A}{E_0}$ edges.
({\em Star} is a complete bipartite graph $K_{1,k}$, $k\geq 0$. 
Thus a single vertex is also a star. If $K=1$ the star is an edge and the unique vertex in $\rel{A}{\ell}$ or $\rel{A}{1}$ considered as root.
{\em Star forest} is any graph created as a disjoint union of stars.)
This leads us to the following definition which makes sense for structures in 
$\Forb_{L^\bullet}(\mathcal T^\bullet)$ which are not necessarily irreducible:

\begin{defn}[Star equivalence]
For an $L^\bullet$-struc\-ture $\str{A} \in \Forb_{L^\bullet}(\mathcal T^\bullet)$ assume that the edges $\rel{A}{E_0}$ form a star forest.   Denote by $\approx_{\str{A}}$ (called {\em star equivalence}) the equivalence expressing the component structure of this star forest. 
\end{defn}

The equivalence $\approx_{\str{A}}$ for structures that are not necessarily pure-irreducible will play the role of the chimney equivalence for pure-irreducible structure.

\begin{defn}
Denote by $\Forb_{L^\bullet}^{\approx}(\mathcal T^\bullet)$ the class of all (not necessarily pure-irreducible)  structures $\str{A} \in \Forb_{L^\bullet}(\mathcal T^\bullet)$ where $\approx_{\str{A}}$ is a star equivalence (that is edges $\rel{A}{E_0}$ forms a star forest) and such that all vertices that appear in centres of stars (possibly degenerated to 1 vertex) are either in $\rel{A}{\ell}$ or $\rel{A}{1}$. 
\end{defn}

In this section we  aim to prove Theorem~\ref{thm:mainII} which gives Ramsey property for structures with
star equivalences. Advancing this we  modify the key part of proof of Theorem~\ref{thm:NR}.

We shall stress the fact that $\Forb_{L^\bullet}^{\approx}(\mathcal T^\bullet)$ can not be expressed
as a class $\Forb_{L^\bullet}(\mathcal T')$ where $\mathcal T'$ is a set of
pure-irreducible structures.  There is no way to express the fact that no vertex can be connected
to centres of two different stars. Consequently we can not apply Theorem~\ref{thm:NR} (or \ref{mainI}) directly.
The proof below uses a variant of the Partite Construction \cite{Nevsetvril1989}. We modify its core part---Partite Lemma---in order to satisfy the 
additional equivalence condition.

The following is the main definition of this section.
\begin{defn}[$\str{A}$-partite structure]
Let $\str{A}$ be a  $L^\bullet$-structure. Assume $A = \{1, 2,\ldots, a\}$ with the natural ordering.  An $\str{A}$-partite $L^\bullet$-structure is a tuple $(\str{A},{\mathcal X}_\str{B},$ $\str{B})$ 
where $\str{B}$ is an $L^\bullet$-struc\-ture and $\mathcal X_\str{B}=\{X^1_\str{B},X^2_\str{B},\ldots, X^a_\str{B}\}$  partitions vertex set of $\str{B}$ into $a$ classes ($X^i_\str{B}$  are called {\em parts} of $\str{B}$)  such that 
\begin{enumerate}
\item the ordering of $\str{B}$ is lexicographic induced by the ordering of $\str{A}$ and of parts $X^i_\str{B}$ (particularly it satisfies $X^1_\str{B} < X^2_\str{B} < \ldots < X^a_\str{B}$);

\item mapping $\pi$ which maps every $x \in X^i_\str{B}$ to $i$ $(i = 1,2,\ldots,a)$ is a homomorphism $\str{B}\to\str{A}$ in $L^\bullet$ ($\pi$ is called the {\em projection}); 

\item every tuple in every relation of $\str{B}$ meets every class $ X^i_\str{B}$ in at most one element.
\end{enumerate}
The isomorphisms and embeddings of $\str{A}$-partite structures, say of $\str{B}$ into $\str{B'}$ are defined as the isomorphisms and embeddings of $L^\bullet$-structures together with the condition that all parts 
are preserved (i.e. the part $X^i_\str{B}$ is mapped to $X^i_\str{B'}$  for every $i = 1,2,\ldots,a$).
\end{defn}

In the following we will consider $L^\bullet$-structure $\str{A}$ to be also an $\str{A}$-partite structure, where each class of the partitions $X^1_\str{A}, X^2_\str{A},\ldots, X^a_\str{A}$ consists of single vertex. For brevity, given a class of $L^\bullet$ structures $\mathcal K$ and an $L^\bullet$-partite structure $\str{B}=(\str{A},{\mathcal X}_\str{B},$ $\str{B}')$, we will also write $\str{B}\in \mathcal K$ with the meaning $\str{B}'\in \mathcal K$.
We start by proving the following modification of the Partite Lemma \cite{Nevsetvril1989}. The main difference is that we consider structures with equivalences.

\begin{lem}[Partite Lemma]\label{partlem}
Let $\str{A}\in \B^\bullet$ be an $L^\bullet$-struc\-ture with star equivalence $\approx_{\str{A}}$ induced by $\rel{A}{E_0}$ edges. Assume without loss of generality  $A = \{1, 2,\ldots, a\}$ with the natural ordering. Let $\str{B}\in \Forb_{L^\bullet}^{\approx}(\mathcal T^\bullet)$  be an $\str{A}$-partite $L^\bullet$-structure
with parts $\mathcal X_\str{B} = \{X^1_\str{B},X^2_\str{B},\ldots, X^a_\str{B}\}$  and star equivalence $\approx_{\str{B}}$ induced by 
$\rel{B}{E_0}$ edges. Further assume that every vertex of $\str{B}$ is contained in a copy of $\str{A}$.
Then there exists an $\str{A}$-partite $L^\bullet$-structure $\str{C}$
with parts $\mathcal X_\str{C} = \{X^1_\str{C},X^2_\str{C},\ldots, X^a_\str{C}\}$ where $\rel{C}{E_0}$ form a star forest defining star equivalence $\approx_{\str{C}}$ 
such that 
$$
\str{C}\longrightarrow (\str{B})^\str{A}_2.
$$

Explicitly: For every $2$-colouring of all $\str{A}$-partite substructures of $\str{C}$ which are isomorphic to $\str{A}$ there exists a substructure $\widetilde{\str{B}}$ of $\str{C}$, $\widetilde{\str{B}}$ isomorphic to $\str{B}$, such that all the substructures of $\widetilde{\str{B}}$ which are isomorphic to $\str{A}$ are all monochromatic.
Particularly, the isomorphism of $\widetilde{\str{B}}$ and $\str{B}$ (which is an embedding of $\str{B}$ into $\str{C}$) and thus maps $\rel{B}{E_0}$ to $\rel{C}{E_0}$ and therefore also maps the equivalence  $\approx_{\str{B}}$   to  $\approx_{\str{C}}$. 
\end{lem}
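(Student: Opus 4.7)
The plan is to adapt the classical Partite Lemma from \cite{Nevsetvril1989}, whose proof uses the Hales--Jewett theorem, by adding a bookkeeping step that guarantees $\relsys{C}$ carries a star equivalence $\approx_{\relsys{C}}$ with respect to which the canonical embedding of $\relsys{B}$ into $\relsys{C}$ preserves $\approx$. The Ramsey-theoretic part of the argument is standard; the treatment of the equivalence is the new ingredient.

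First, enumerate ${\relsys{B}\choose \relsys{A}}=\{\widetilde{\relsys{A}}_1,\ldots,\widetilde{\relsys{A}}_t\}$ and let $N=\mathrm{HJ}(t,2)$ be the Hales--Jewett number for $t$ letters and $2$ colors. The underlying set of $\relsys{C}$ consists of $N$-indexed copies of vertices of $\relsys{B}$, organized in parts $X^i_{\relsys{C}}$ for $i\in A$, with relations chosen so that each ``point'' of the combinatorial cube $[N]^t$ yields an induced copy of $\relsys{A}$ and each combinatorial line yields an induced copy of $\relsys{B}$. This is exactly the data produced by the standard partite construction, and it immediately gives the partite conditions (lexicographic ordering, monotone projection, transversality of tuples).

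The central novelty is that we replicate vertices of $\relsys{B}$ \emph{by $\approx_{\relsys{B}}$-classes} rather than individually: every vertex of a given star $S$ of $\relsys{B}$ is indexed with the same multi-index, so that an entire copy of $S$ sits inside $\relsys{C}$ as a single star. With this coherent bookkeeping each vertex of $\relsys{C}$ belongs to exactly one star, hence the $E_0$-edges of $\relsys{C}$ form a star forest and $\approx_{\relsys{C}}$ is well-defined. The partite construction only introduces $E_0$-edges that are inherited from some embedded copy of $\relsys{B}$, and with our indexing every such edge stays within one star of $\relsys{C}$, so no cross-star $E_0$-connections arise. The embedding of $\relsys{B}$ into $\relsys{C}$ (given by one ``constant'' point of the cube) then obviously carries $\approx_{\relsys{B}}$ into $\approx_{\relsys{C}}$.

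The Ramsey property follows as in the classical argument: a $2$-coloring of copies of $\relsys{A}$ in $\relsys{C}$ restricts to a coloring of the cube points, Hales--Jewett produces a monochromatic combinatorial line, and that line is a copy $\widetilde{\relsys{B}}$ of $\relsys{B}$ in $\relsys{C}$ all of whose copies of $\relsys{A}$ share the same color. The main obstacle is reconciling the star-preserving indexing with the Hales--Jewett cube combinatorics. This is manageable because the star equivalence of $\relsys{A}$ is already compatible with its partite structure (centres sit in the unary relation $\rel{A}{L}$, leaves are attached by $E_0$), so lifting it coherently through the partite construction preserves the star-forest property at every step; this is the place where the hypothesis that $\relsys{A}$ is itself a reduction of a good $L_2$-structure is essential.
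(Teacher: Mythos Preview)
Your high-level plan (Hales--Jewett for the Ramsey part, plus a separate verification that $\relsys{C}$ carries a star equivalence) matches the paper. However, the mechanism you propose for the star equivalence --- ``replicate vertices of $\relsys{B}$ by $\approx_{\relsys{B}}$-classes'' so that ``every vertex of a given star $S$ is indexed with the same multi-index'' --- is not what the paper does and, as written, is not a well-defined construction. A star of $\relsys{B}$ spans several parts, while vertices of $\relsys{C}$ in part $i$ must be indexed by data living in $X^i_\relsys{B}$; there is no coherent way to give ``the same multi-index'' to vertices in different parts without breaking the Hales--Jewett cube structure (points of the cube must correspond to copies of $\relsys{A}$, one vertex per part).

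The paper uses the \emph{unmodified} product construction: $X^i_\relsys{C}$ is the full set of functions $f:\{1,\dots,N\}\to X^i_\relsys{B}$, with relations defined coordinatewise through copies of $\relsys{A}$ (and an auxiliary clause for tuples not lying in any copy). The star equivalence is then recovered \emph{a posteriori}: one sets $f\sim g$ iff $s_\relsys{B}(f(l))=s_\relsys{B}(g(l))$ for every $l$ (where $s_\relsys{B}$ sends a vertex to its star center) and proves that $\sim=\approx_\relsys{C}$. The key technical step --- which your sketch asserts rather than proves --- is that goodness of $\relsys{A}$ forces every $E_0$-edge of $\relsys{B}$ to lie in some copy of $\relsys{A}$; hence an $E_0$-edge $(f,g)$ in $\relsys{C}$ forces coordinatewise $E_0$-edges $(f(l),g(l))$ in $\relsys{B}$, giving $f\sim g$. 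Your sentence ``every such edge stays within one star of $\relsys{C}$'' is precisely this claim, and it is established by the coordinatewise argument, not by a modified indexing scheme. (Minor slip: the cube is $\{1,\dots,t\}^N$, not $[N]^t$.)
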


\begin{proof}
Let $\widetilde{\str{A}}_1, \widetilde{\str{A}}_2,\ldots, \widetilde{\str{A}}_t$ be the enumeration of all substructures of $\str{B}$ which are isomorphic to $\str{A}$. 

We take $N$ sufficiently large (that will be defined later) and construct an $\str{A}$-partite $L^\bullet$-structure  $\str{C}$
with parts $\mathcal X_\str{C} = \{X^1_\str{C},X^2_\str{C},\ldots, X^a_\str{C}\}$  as follows:
\begin{enumerate}
\item For every $1\leq i\leq a$ set $X_\str{C}^i$ is the set of all functions $$f:\{1,2,\ldots,N\}\to X_\str{B}^i.$$
\item 
The ordering $\leq_\str{C}$ of $\str{C}$ is defined  lexicographically as an extension of all orderings of $\str{A}$ and $\{1,2,\ldots,N\}$.
\item For every relational symbol $\rel{}{}\in L^\bullet$, $(f_1,f_2,\ldots, f_r)\in \rel{C}{}$ if and only if one of the following occurs:
\begin{enumerate}
\item There exists function $u:\{1,2,\ldots, N\}\to \{1,2,\ldots, t\}$ such that for every $1\leq l\leq N$ the tuple $(f_1(l),f_2(l),\ldots, f_r(l))$ is in $\nbrel{\widetilde{\str{A}}_{u(l)}}{}$.
\item There exists $\omega\subseteq \{1,2,\ldots, N\}$ and function $u:\{1,2,\ldots, N\}\setminus \omega\to \{1,2,\ldots, t\}$ such that functions $f_1, f_2, \ldots, f_r$ are all constant on $\omega$ and:
\begin{enumerate}
\item $f_1(l)$, $f_2(l), \ldots, f_r(l)$ are all vertices of $\widetilde{\str{A}}_{u(l)}$ in $\str{B}$, for every $l\in \{1,2,\ldots, N\}\setminus \omega$,
\item $(f_1(l),f_2(l),\ldots, f_r(l))\in \rel{B}{j}$ and there is no copy of $\str{A}$ in $\str{B}$ containing all vertices $f_1(l),f_2(l),\ldots, f_r(l)$, for $l\in \omega$.
\end{enumerate}
\end{enumerate}
\end{enumerate}

If vertex $v$ of $\str{B}$ is contained in the star $S$, denote by $s_\str{B}(v)$ the centre of $S$ and put $s_\str{B}(v)=v$ otherwise.
Define the equivalence $\equiv$ on $\str{C}$ as follows:
$f\equiv g$ if and only if $s_\str{B}(f(l))=s_\str{B}(g(l))$ for every $1\leq l\leq N$.

Observe that $\str{C}\in\Forb_{L^\bullet}(\mathcal T^\bullet)$ as $\str{A}\in \Forb_{L^\bullet}(\mathcal T^\bullet)$ and the projection $\pi:\str{C}\to \str{A}$ is a homomorphism.
Moreover every pure-irreducible structure $\str{D}$ of $\str{C}$ contains at most one vertex from every partition $X_\str{C}^i$ (by the construction) and thus $\pi$ restricted to $\str{D}$ is injective.

We shall check that indeed $\str{C}$ is  an $\str{A}$-partite $L^\bullet$-structure (and thus again $\str{C} \in \Forb_{L^\bullet}(\mathcal T^\bullet)$)
with parts $\mathcal X_\str{C} = \{X^1_\str{C},X^2_\str{C},\ldots, X^a_\str{C}\}$  and finally we prove that the edges $\rel{C}{E_0}$ form a star forest and that the star equivalence $\approx_{\str{C}}$ coincides with  $\equiv$.
Most of this follows immediately from the definition. We pay extra attention to checking that $\equiv$ give the star equivalence. This is the main difference from \cite{Nevsetvril1989}.

It is easy to see that $\equiv$ is indeed an equivalence.
We show that the $\equiv$ is the star equivalence of  $\approx_{\str{C}}$ induced by edges $\rel{C}{E_0}$. First observe that $u\approx_\str{B} v$ if and only if $s_\str{B}(u)=s_\str{B}(v)$. Moreover, because $\str{A}$ corresponds to a reduced good bowtie-free structure and because every vertex of $\str{B}$ is contained in a copy of $\str{A}$ we know that every edge of type 0 in $\str{B}$ is an edge of a copy of $\str{A}$.  It follows that vertices $f,g$ of $\str{C}$ are connected by edge of type 0 if and only if for each $1\leq l\leq N$ we have an edge of type 0 in between $f(l)$ and $g(l)$, and consequently we have $s_\str{B}(f(l))=s_\str{B}(g(l))$. Thus $\approx_\str{C}$ and $\equiv$ coincide. This proves that $\str{C}\in  \Forb^\approx_{L^\bullet}(\mathcal T^\bullet)$

For completeness we check that $\str{C}\longrightarrow (\str{B})^\str{A}_2$. Let $N$ be the Hales-Jewett number guaranteeing a monochromatic line in any $2$-colouring of $N$-dimensional cube over alphabet $\{1,2,\ldots,t\}$.

Now assume that we have a $2$-colouring of all copies of $\str{A}$ in $\str{C}$.
Using the definition of $\str{C}$ we see that  among these copies of $\str{A}$ are copies induced by an $N$-tuple    $(\widetilde{\str{A}}_{u(1)}, \widetilde{\str{A}}_{u(2)},\ldots,\widetilde{\str{A}}_{u(N)})$ of copies of $\str{A}$ for every function $u:\{1,2,\ldots, N\}\to \{1,2,\ldots, t\}$. However such copies are coded by the elements of the cube $\{1,2,\ldots,t\}^N$ and thus there is a monochromatic 
combinatorial line. This line in turn will lead to a copy $\widetilde{\str{B}}$  of $\str{B}$ in $\str{C}$ with all edges  of the form (a), (b) described above. 
\end{proof}

We can now invoke the Partite Construction \cite{Nevsetvril1989,Nevsetvril1995}
 in its standard form.
We prove:
\begin{thm}
\label{thm:mainII}
Let $\str{A},\str{B}\in \B^\bullet$  be  $L^\bullet$-struc\-tures with star equivalences $\approx_{\str{A}}$ and $\approx_{\str{B}}$ (induced by $\rel{A}{E_0}$ and $\rel{B}{E_0}$).  (Thus also $\str{A},\str{B}\in \Forb^\approx_{L^\bullet}(\mathcal T^\bullet)$ for $\mathcal T^\bullet$ defined in Theorem~\ref{thm:triangles} and Theorem~\ref{thm:triangles2}). Then there exists  $L^\bullet$-struc\-ture $\str{C}\in \Forb^\approx_{L^\bullet}(\mathcal T^\bullet)$ with the star equivalence $\approx_{\str{C}}$ induced by star forest $\rel{C}{E_0}$ such that  $$\str{C}\longrightarrow (\str{B})^{\str{A}}_2$$ with respect to embeddings preserving the equivalences.
\end{thm}

\begin{proof}
Fix structures $\str{A}, \str{B}$.  Using Theorem~\ref{mainI} obtain $\str{C}_0\in \Forb_{L^\bullet}(\mathcal T^\bullet)$ (i.e. without the star forest condition) that satisfies $\str{C}_0 \longrightarrow (\str{B})^{\str{A}}_2$. Assume without loss of generality that $C_0 = \{1,2,\ldots,c\}$.
Enumerate all copies of $\str{A}$ in $\str{C}_0$  as  $\{\widetilde{\str{A}}_1, \widetilde{\str{A}}_2,\ldots, \widetilde{\str{A}}_b\}$.
We shall define $\str{C}_0$-partite structures $\str{P}_0, \str{P}_1,\ldots, \str{P}_b$ which, as we shall show, will all belong to $\Forb^\approx_{L^\bullet}(\mathcal T^\bullet)$ with the star equivalence which induces $\approx_{\str{P}_i}$. Putting $\str{C} = \str{P}_b$ we shall have the desired Ramsey property $\str{C}\longrightarrow (\str{B})^{\str{A}}_2$.

We denote the parts of $\str{C}_0$-partite structures $\str{P}_i$ as $\mathcal X_{\str{P}_i} = \{X^{i}_1, X^{i}_2, \dots,X^{i}_c\}$.
As usual the structures $\str{P}_i$ are called {\em pictures}.
Pictures will be constructed by induction on $i$. 

The picture $\str{P}_0$ is constructed as a disjoint union of  copies of $\str{B}$: for every copy $\widetilde{\str{B}}$ of
$\str{B}$ in $\str{C}_0$ we consider a new isomorphic and disjoint copy $\widetilde{\str{B}}'$ in $\str{P}_0$ which intersects part $X^{0}_l$  if and only if the image of the projection  $\widetilde{\str{B}}$ contains vertex $l$ (so the projection restricted to $\widetilde{\str{B}}'$ is $\widetilde{\str{B}}$). Clearly $\str{P}_0\in \Forb^\approx_{L^\bullet}(\mathcal T^\bullet)$. The star forest of $\str{P}_0$ is induced by the star forest of all copies $\widetilde{\str{B}}'$.

Let the picture $\str{P}_i\in\Forb^\approx_{L^\bullet}(\mathcal T^\bullet)$ be already constructed.  Let $\widetilde{A}_i = \{x_1,x_2, \ldots,x_a\}$ be the vertices of $\widetilde{\str{A}}_i$            
(in the order of $\str{C}_0$). Let $\str{B}_i$ be the substructure of  $\str{P}_i$ induced by $\str{P}_i$ on the union of vertices of those copies of $\str{A}$ which projects to $\widetilde{\str{A}}_i$. (Note that $\str{B}_i$ need not contain all vertices of $\str{P}_i$ in parts $X^i_{x_1}, X^i_{x_2},\ldots,X^i_{x_a}$.) In this situation we use Partite Lemma~\ref{partlem} to obtain
an $\str{A}$-partite structures $\str{C}_{i+1}$  with parts  $X^{i+1}_{x_1}, X^{i+1}_{x_2},\ldots,X^{i+1}_{x_a}$.
Now consider all substructures of $\str{C}_{i+1}$ which are isomorphic to $\str{B}_i$ and extend each of these structures to a copy of $\str{P}_i$ (thus some new vertices may be added even in parts $X^i_{x_j}$). These copies are disjoint outside $\str{C}_{i+1}$, however in this extension we preserve the parts of all the copies.
The result of this multiple amalgamation of copies of $\str{P}_i$ is $\str{P}_{i+1}$. The star forest of $\str{P}_{i+1}$ is defined as an amalgamation of star forest of copies of $\str{P}_i$. Of course we have to check below that this indeed results in a star forest.

Put $\str{C} = \str{P}_b$.
It  follows easily (by now by a standard argument cf.~\cite{Nevsetvril1989,Nevsetvril1995}) that  $\str{C}\longrightarrow (\str{B})^{\str{A}}_2$:
by a backward induction one proves  that in any $2$-colouring of ${\str{C}}\choose{\str{A}}$ there exists a copy $\str{P}$ of 
$\str{P}_0$ such that the colour of a copy of $\str{A}$ in $\str{P}$ depends only on its projection.
As this in turn induces colouring of copies of $\str{A}$ in $\str{C}_0$, we obtain a monochromatic copy of 
$\str{B}$.

We have to check that the edges of $\rel{C}{E_0}$ form a star forest and that $\str{C}$ belongs to  $ \Forb^\approx_{L^\bullet}(\mathcal T^\bullet)$.
To do so we proceed by an induction on $i=0,1,2,\ldots,b$.
The statement is clear for picture $\str{P}_0$.
 In the induction step ($i \implies i+1$) we have to inspect the amalgamation of copies of $\str{P}_i$ along the copies of structures $\str{B}_i$ in $\str{C}_{i+1}$. It is clear that $\str{P}_{i+1}$ belongs again to $\Forb_{L^\bullet}(\mathcal T^\bullet)$ (as the forbidden substructures in $\mathcal T^\bullet$ are all pure-irreducible). It remains to show that $\approx_{\str{P}_{i+1}}$ is a star equivalence of $\str{P}_{i+1}$.  Because $\str{A}$ is assumed to be a good bowtie-free structure and because $\str{B}_i$ has every vertex in a copy of $\str{A}$, we know that every star 
with leaf in $\str{B}_i$ also contains its centre in $\str{B}_i$.  By Lemma~\ref{partlem} we a get a star equivalence on $\str{C}_{i+1}$. The star equivalence is preserved by the free amalgamations of $\str{P}_i$ over $\str{C}_{i+1}$ because
every time we unify leaves of a star we also unify the centre. Consequently the edges of $\nbrel{\str{P}_{i+1}}{E_0}$ form a star forest inducing in $\str{P}_{i+1}$ a star equivalence $\approx_{\str{P}_{i+1}}$.
\end{proof}
\section{Putting it together: Bowtie-free graphs have a Ramsey lift}
\label{sec:final}

In this section we prove the main Theorem~\ref{thm:intro} in the following form ($\overline{\B}$ is defined in Definition~\ref{def:L2}):
\begin{thm}
\label{thm:mainIII}
The class $\overline{\B}$ is a Ramsey class.
\end{thm}
\begin{proof}
Let $\str{A},\str{B}\in \overline{\B}$ be fixed. We prove the existence of Ramsey object $\str{C}\in\overline{\B}$
in several steps.  

Without loss of generality we can assume that $\str{B}\in \B_2$.
For $\str{A}\in \overline \B$ there exists, up to isomorphism,  a unique minimal
 $L_2$-struc\-ture $\widehat{\str{A}}\in \B_2$ which corresponds to a good bowtie-free graph such that $\str{A}$ is a substructure of  $\widehat{\str{A}}$.
This just means that we complete each centre $c(\str{A})$ to a ``full'' centre by possibly adding to every vertex left-, or  right-vertex or completing some vertices to $K_4$.  
This correspondence $\str{A}\to\widehat{\str{A}}$ is functorial in the
sense that (for any  $L_2$-structure $\str{B}\in \mathcal B_2$) the correspondence of $\str{B}\choose\str{A}$ and
$\str{B}\choose\widehat{\str{A}}$ is one to one.  (This is another consequence of the algebraic closure.) Thus we may assume that
both $\str{A}$ and $\str{B}$ are  $L_2$-struc\-tures (in the sense of Definition~\ref{def:L2}).

Let $\str{A}^\bullet, \str{B}^\bullet\in \B^\bullet$ be reduced structures of $\widehat{\str{A}}$ and $\str{B}$  with star forests inducing equivalences $\approx_{\str{A}}$ and $\approx_{\str{B}}$.
By Theorem~\ref{thm:mainII}  there exists $\str{D}^\bullet\in \Forb^\approx_{L^\bullet}(\mathcal T^\bullet)$ satisfying
$$\str{D}^\bullet\longrightarrow (\str{B}^\bullet)^{\str{A}^\bullet}_2.$$ with $\rel{{D}^\bullet}{E_0}$ forming a star forest and defining star equivalence $\approx_\str{D}$.
Without loss of generality we assume that all vertices and tuples in relations of $\str{D}^\bullet$ are contained in a copy of
$\str{B}^\bullet$. 

We use $\str{D}^\bullet$ to reconstruct
$\str{C}\in \mathcal B_2\subseteq \overline{\mathcal B}$ which is a ``completion'' of $\str{D}^\bullet$: $\str{C}$ will contain $\str{D}^\bullet$ as a
non-induced substructure in a way that every copy of $\str{B}^\bullet$ in
$\str{D}^\bullet$ can be extended to induced copy of $\str{B}$ in $\str{C}$. 

$\str{D}^\bullet$ is a reduced structure. First we reverse the reduction process. Denote by $\str{D}$ a structure
created from $\str{D}^\bullet$ by adding, for every vertex $(u)\in \rel{D}{\ell}$, a new vertex $v$, adding $(v)$ to $\rel{D}{r}$, and
connecting every vertex $u'$, $u'\approx_\str{D} u$, to $v$ by an edge in $\rel{D}{E_0}$. The result of this operation is a chimney. Similarly, for every  $(u)\in \rel{D}{1}$  introduce the additional 3 vertices
to form the clique. Finally add the edges $\rel{D}{E_1}$ connecting the newly introduced vertices
to vertices of $\str{D}^\bullet$ as described by the existing tuples in $\rel{D^\bullet}{t_i}$. Because
we assume that every vertex and every tuple of every relation of $\str{D}^\bullet$ is in a copy of $\str{B}^\bullet$
and every vertex in $\str{D}^\bullet$ is a part of a star equivalence where all copies share the centre, there is unique way of doing so:
consider $a_1\approx_\str{D} a_2\in \str{D}^\bullet$ and $b_1\approx_\str{D} b_2$ such that $(a_1,b_1)\in \rel{D^\bullet}{t_i}$ and $(a_2,b_2)\in \rel{D^\bullet}{t_j}$.
We have a copy $\widetilde{B}_1^\bullet$ containing
$\{a_1,b_1\}$ and a copy $\widetilde{B}_2^\bullet$ containing $\{a_2,b_2\}$. By lemma~\ref{lem:amalg2} we know the amalgamation is possible. It follows that types $t_i$ and $t_j$ must agree on the newly introduced edges. By iterating this we obtain structure $\str{D}$ where we also extend every isolated triangle into a chimney in the only way which is consistent with the lift.

Next we check that the $L_1$-shadow $\sh(\str{D})$ is a good ordered bowtie-free graph. 
$\sh(\str{D}^\bullet)$ is triangle-free and thus every triangle in $\str{D}$ contains at least one new central vertex. Every new triangle is a part of a copy of $\sh(\str{B})$ (which was created by expansion of a copy of $\str{B}^\bullet$) and thus it consists of edges of type 0  only. Consequently we only need to verify that edges in $\rel{D}{E_0}$ do not form a bowtie.
It is easy to verify that those edges
however forms cliques $C_4$ (which were introduced by expanding vertex in $\rel{D^\bullet}{1}$) and chimneys spanning
vertices of each star equivalence class of $\str{B}^\bullet$ which centre is in $\rel{D^\bullet}{\ell}$.

The ordering of $\str{D}^\bullet$ is not necessarily admissible. The use of
contracted structures along with the notion of admissible ordering in the sense
of Theorem~\ref{thm:NR} makes the order of $\str{D}^\bullet$ satisfy conditions
1, 2, and 3 of Definition~\ref{def:order}. The order of non-central vertices
is however free.  It is not difficult to see that the non-central vertices  can
be reordered according their centres preserving relative order of vertices
with the same centre.  This order is admissible and does preserve all
embeddings of admissibly ordered structures we need.
The order of $\str{D}$ can then be defined from the admissible order of $\str{D}^\bullet$
in a natural way.

We have constructed good ordered $L_1$-structure $\sh(\str{D})$ where every copy of shadow $\sh(\str{B}^\bullet)$
was extended to a copy of $\sh(\str{B})$.  Finally we put $\str{C}=L_2(\sh(\str{D}))\in \overline{\B}$. Because the lift $L_2$
is constructed in an unique way which preserves substructures, we have
$$\str{C}\longrightarrow (\str{B})^\str{A}_2.$$
\end{proof}

\section{The lift property}
\label{sec:expansion}
We say that a lifted class $\mathcal K^+$ has the {\em lift  property} ({\em expansion property} in~\cite{The2013}) {\em relative to $\mathcal K$}
(where $\mathcal K$ is shadow of $\mathcal K^+$) if and only if for every $\str{A}\in \mathcal K$ there is
$\str{B}\in \mathcal K$ such that for every $\str{A}^+,\str{B}^+\in \mathcal K^+$ such that
the shadow of $\str{A}^+$ is $\str{A}$ and the shadow of $\str{B}^+$ is $\str{B}$ there is an
embedding from $\str{A}^+$ to $\str{B}^+$.

The lift property is a generalisation of the {\em ordering property}~\cite{Nevsetvril1976a}. Structures
that have both Ramsey and ordering property play important role in \cite{Kechris2005} 
where they are used to obtain universal minimal flows.  \cite{The2013} defines expansion property (which for the consistency we call here the lift property) and gives results analogous to~\cite{Kechris2005} in
the setting of Ramsey classes with the lift property.  To apply these
results to the class $\overline{\B}$ we now have to show that lifts constructed in this
paper have the lift property. 

\begin{thm}
\label{thm:extendproperty}
$\overline{\B}$ has the lift property relative to $\B$.
\end{thm}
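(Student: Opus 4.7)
The plan is to deduce the expansion property from the main Ramsey theorem (Theorem~\ref{thm:intro}) by the classical argument of Nešetřil and Rödl \cite{Nevsetvril1976a}, originally used to derive the ordering property for finite ordered graphs from their Ramsey theorem, adapted here to the lift class $\overline{\B}$.

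First I would fix $\relsys{A} \in \B$ and enumerate the (finitely many, up to isomorphism) admissible lifts $\relsys{A}_1^*, \ldots, \relsys{A}_m^*$ of $\relsys{A}$ in $\overline{\B}$. Using the amalgamation property of $\overline{\B}$ (Corollary~\ref{cor:ultrahomogeneous}), I would build a ``menu'' lift $\relsys{C}^* \in \overline{\B}$ whose underlying shadow contains, for each $i$, a distinguished embedded copy of $A$ realising the lift type $\relsys{A}_i^*$. Then I would iterate the main Ramsey theorem (the complete version of Theorem~\ref{thm:mainII}, obtained via the completion of Section~\ref{sec:final}) to produce $\relsys{D}^* \in \overline{\B}$ satisfying
$$
\relsys{D}^* \longrightarrow (\relsys{C}^*)^{\relsys{A}_i^*}_m \quad \text{for every } i = 1, \ldots, m \text{ simultaneously}.
$$
I would then set $\relsys{B}$ to be the shadow of $\relsys{D}^*$ in $\B$; this is the candidate witness.

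For the verification, given any lift $\relsys{B}^*$ of $\relsys{B}$ in $\overline{\B}$, I would consider the $m$-colouring $\chi_i$ on $\binom{\relsys{D}^*}{\relsys{A}_i^*}$ that assigns to each copy the isomorphism type of its $\relsys{B}^*$-induced lift. Iterated applications of the Ramsey property yield a single $\widetilde{\relsys{C}^*} \in \binom{\relsys{D}^*}{\relsys{C}^*}$ on which each $\chi_i$ is constant, say with value $c_i$. Because $\widetilde{\relsys{C}^*}$ realises every $\relsys{A}_i^*$ as an embedded copy of $A$, this immediately gives $\relsys{A}_{c_i}^* \hookrightarrow \relsys{B}^*$ for each $i$.

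The main obstacle is verifying that $\{c_1, \ldots, c_m\} = \{1, \ldots, m\}$, so that every admissible lift type of $\relsys{A}$ is forced to appear in $\relsys{B}^*$. Here I would exploit the rigidity of the lift (Definitions~\ref{def:order}--\ref{defn:L2}): the predicates $L, R, 1, 2, 3, 4$ and the binary types $t_1, \ldots, t_N$ are all determined by the admissible ordering of the ordered good shadow, so no two distinct admissible lift types on the same shadow can be compatible with a common ordering. A pigeonhole argument, combined with the fact that $\relsys{C}^*$ was chosen to exhibit all $m$ lift types, should force the assignment $i \mapsto c_i$ to be a bijection and hence surjective, completing the proof.
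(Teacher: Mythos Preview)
Your outline follows the Nešetřil--Rödl template, but the last paragraph contains a genuine gap: the claimed bijectivity of $i\mapsto c_i$ is not justified, and the supporting assertion that ``no two distinct admissible lift types on the same shadow can be compatible with a common ordering'' is false. A lift $\relsys{A}_i^*\in\overline{\B}$ records not only the induced order on $A$ but also the unary marks $L,R,1,2,3,4$ and the binary type relations, and these depend on the \emph{good completion} witnessing $\relsys{A}_i^*$, not merely on the order. Already for $\relsys{A}$ a single vertex there are distinct lifts (e.g.\ marked $L$ versus unmarked non-central) carrying the same trivial order. Since the distinguished copies $X_i,X_j\subseteq\widetilde{\relsys{C}^*}$ are disjoint, nothing prevents a given $\relsys{B}^*$ from inducing the same lift type on both; your pigeonhole step therefore does not force surjectivity, and the argument as written does not guarantee that every $\relsys{A}_j^*$ embeds in $\relsys{B}^*$.

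The paper's proof avoids this difficulty by first replacing $\relsys{A}$ by all of its \emph{minimal good} extensions $\relsys{A}_1,\ldots,\relsys{A}_N$ under all admissible orderings, and then colouring only certain two-vertex non-edge substructures $\relsys{E}_I,\relsys{E}_{II},\relsys{E}_{III}$ with two colours, according to whether the $\relsys{B}'$-order agrees or disagrees with the reference order. The point of passing to good graphs is that on a good subgraph the central/non-central partition and the centre structure are forced by the triangles already present in the shadow, so the only remaining freedom in $\relsys{B}'$ is the admissible ordering. After Ramsey one obtains a copy of the disjoint union $\relsys{A}'$ on which, within each of the three vertex classes, the $\relsys{B}'$-order is either preserved or reversed; since reversing inside any class again yields an admissible ordering, every lift type of $\relsys{A}$ is realised among the $\relsys{A}_i$. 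Your menu-and-colour-by-full-type shortcut skips precisely the reduction to good extensions that collapses the problem to an ordering question with the required reversal symmetry.
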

\begin{proof}
This result follows in an analogy to \cite{Nevsetvril1976a} (where it is shown that the ordered edge Ramsey property implies ordering property). Essentially we only need to deal
with additional unary and binary relations present in our lifts.

Fix a bowtie-free graph $\str{A}$.  We will give explicit construction of $\str{B}$ needed
for the lift property. Consider all possible extensions of
$\str{A}$ into a good bowtie-free graphs that are minimal in the sense that 
removing any non-empty set of vertices from the extension makes the graph either not good or
not containing $\str{A}$.  (Clearly this is a finite set.) Now consider all admissible orderings (Definition~\ref{def:order}) of these
extensions.  Denote these ordered good bowtie-free graphs by
$\str{A}_1,\str{A}_2,\ldots, \str{A}_N$.

Now we extend graphs $\str{A}_1,\str{A}_2,\ldots,
\str{A}_N$ to graphs  $\str{A}'_1,\str{A}'_2,\ldots, \str{A}'_N$ by simple gadgets that will allow us to use the Ramsey property to ensure the order.
For that we consider all vertices $v$ of $\str{A}_i$ ($1\leq i\leq N)$ with the following properties:
\begin{enumerate}
\item $M_I=\{v;(v)\in \nbrel{L_2(\str{A}_i)}{\ell}\}$,
\item $M_{II}=\{v;(v)\in \nbrel{L_2(\str{A}_i)}{1}\}$,
\item $M_{III}=\{v;(v)$ is in no unary relations of $L_2(\str{A}_i)\}$.
\end{enumerate}
Sets $M_I$, $M_{II}$ and $M_{III}$ are chosen in a way that the orders of sets $M_I$, $M_{II}$ and $M_{III}$ together with admissibility of order (Definition~\ref{def:order}) determine total ordering of  all vertices.

Consider pair of vertices $u\neq v$ of $\str{A}_i$ where both $u$ and $v$ belong to one of the sets $M_{I}$, $M_{II}$ or where both $u$ and $v$ belong to set $M_{III}$ and have the same centre.
For each such pair extend ordered good bowtie-free graph $\str{A}_i$ in a way so there is a vertex $w(u,v)$ that belongs to same class in the lift as $u$ and $v$, it is not connected
by an edge to $u$ nor $v$, and it is in between $u$ and $v$ in the order of $\leq_{\str{A}^+_i}$.  Such a vertex can always be added in a way that the result is an ordered good bowtie-free graph (by possibly
introducing new chimney or a copy of $K_4$). We denote by $\str{A}'_i$ an ordered good bowtie-free graph having vertex $w(u,v)$ for
every possible choice of $u$ and $v$ in $\str{A}_i$.

Denote by $\str{A}'$  the disjoint union of graphs $\str{A}'_1,\str{A}'_2,\ldots, \str{A}'_N$.
Now for every pair of $u<_{\str{A}}w$ for which we introduced $w(u,v)$ consider substructures
induced on $\{u,w(u,v)\}$ and $\{w(u,v), v\}$ by $L_2(\str{A}')$.

These structures
are always isomorphic. (Recall that $\rel{}{\ell},\rel{}{r},\rel{}{1},\rel{}{2},\rel{}{3},\rel{}{4}$ are the only unary  relations in $L_1$.)
Denote isomorphism types of those structures by $\str{E}_I$, $\str{E}_{II}$ and $\str{E}_{III}$. Now we use
Theorem~\ref{thm:mainII} to get:
\begin{eqnarray*}
 \str{C}_{I}&\longrightarrow&(L_2(\str{A}'))^{\str{E}_I}_2,\\
 \str{C}_{II}&\longrightarrow&(\str{C}_{I})^{\str{E}_{II}}_2,\\
 \str{C}_{III}&\longrightarrow&(\str{C}_{II})^{\str{E}_{III}}_2,
\end{eqnarray*}
where all $\str{C}_{I}, \str{C}_{II}, \str{C}_{III}$ belong to $\overline{\B}$ .
Let $\str{B} \in \B_0$  be the shadow of $\str{C}_{III}$. We claim that $\str{B}$ is a good bowtie-free graph with the lift property for $\str{A}$.

Let $\str{B}^+\in \overline{\B}$ be a $L_2$-lift of any admissibly ordered structure $\str{B}$.
We now assign colours to copies of $\str{E}_{I},\str{E}_{II},$ and $
\str{E}_{III}$ in $\str{C}_{III}$ by comparing order of vertices in $\str{B}^+$ and
$\str{C}_{III}$. (If the order agrees the colour is red and blue otherwise.)  By Ramsey property we obtain a copy of $\str{A}'$ such all
copies of $\str{E}_I$, $\str{E}_{II}$, $\str{E}_{III}$ are monochromatic.  This
means that within the copies of $\str{A}_i$ the relative order of vertices
within sets $M_I$, $M_{II}$ and set $M_{III}$ vertices assigned to a given centre is
either the same as in $\str{B}^+$ or opposite (independently in each class).
It is easy to see that any admissible orderings of $\str{A}$ can be turned to
another admissible ordering of $\str{A}$ by reversing orders within each of
the classes (and possibly adjusting order in between intervals assigned to each
centre). It is thus possible to find $\str{A}_i$ within $\str{B}^+$ that is
ordered the same way.
\end{proof}
In the language of \cite{Kechris2005,The2013} we thus obtain the following corollary.
\begin{corollary}
The automorphism group of the \Fraisse{} limit of $\overline{\B}$ is extremely amenable
and this expansion gives universal minimal flow of the automorphism group of the \Fraisse{} limit of $\B$.
\end{corollary}

\begin{remark}
Theorem~\ref{thm:extendproperty} is the only place in the
paper that actually needs conditions given on admissible ordering by
Definition~\ref{def:order}. There are many possible choices of orderings of
good bowtie-free graphs; completely free ordering, ordering by unary relations, ordering by
corresponding centre, etc.  Good graphs ordered freely (as well as other cases)
can also be shown to have Ramsey lift constructed the same way as our lift.
It is easy to see that such class
however fails to have the lift property:  Consider a good bowtie-free graph $\str{A}$
consisting of two chimneys where order of non-central vertices does not follow
the order of centres. For any choice of $\str{B}$ it is possible to give an
admissible ordering in the sense of Definition~\ref{def:order} giving a lift of
$\str{B}$ that does not include the given lift of $\str{A}$ (because the
order of $\str{A}$ is not admissible).

Theorem~\ref{thm:extendproperty} can thus be understood as an argument why
among possible Ramsey lifts of $\B$ the one given here is the optimal one.
\end{remark}

\section{Concluding remarks}
\label{remarks}
The existence of universal objects is a difficult question in its own  and this is also where the class of all bowtie-free graphs played a vital role. 
 Here is a brief history:
It starts with a (somewhat surprising)  result of Komj\' ath \cite{Komjath1999} that the class of bowtie-free graphs $\mathcal B$ contains a {\em universal graph}, i.e. there exists a (countably infinite) bowtie-free graph $U$  such that every finite or countably infinite  bowtie free graph $G$  has an embedding into $U$ (in other words, $G$ is isomorphic to an {\em induced} subgraph of $U$).
In a sense this obscurely looking example was (and is) a key case for further development (see e.g. \cite{Ackerman2012,Cherlin1999,Cherlin2007,Cherlin2011,Cherlin2015,Cherlin2001,Cherlinb}). 

Note that  the problem of characterising universal graphs seem to be far from being solved even in the following special case:  Given a finite set of finite graphs $\F$, denote by $\Forb_M(\F)$ the class of all finite or countable infinite graphs which do not contain any $F\in \F$ as a (not necessarily induced) subgraph. For which $\F$ does the class $\Forb_M(\F)$ have a universal graph? (Non-induced subgraphs correspond to monomorphism and $M$ in $\Forb_M(\F)$ stands for monomorphism.)
The answer is positive for the bowtie graph while, for example,  the answer is negative for the rectangle $C_4$. It is not even known whether this question, for a general finite $\F$, is decidable~\cite{Cherlin2011}.

The problem was recast in the model theory setting by Cherlin et al.~\cite{Cherlin1999}. They narrowed the search for universal graphs to more structured ultrahomogeneous and $\omega$-categorical graphs (and structures) and in
\cite{Cherlin1999} they provided a structural characterisation of such universal structures:
There is an $\omega$-categorical universal graph in $\Forb_M(\F)$ if and only if the class of existentially complete graphs in $\Forb_M(\F)$ has a locally finite algebraic closure. Bowtie-free graphs fall in this category.

Let us formulate in this setting a consequence of our construction of $\overline{\B}$ in Section~\ref{sec:homogenization}.
Observe that in the amalgamations involved in the proof of Lemma~\ref{lem:amalgamation} the non-central vertices of $\str{B}_1$ and $\str{B}_2$ are identified if and only if they belong to $\str{A}$ (so amalgamation is ``strong on non-central vertices'').
Consequently by the standard \Fraisse{} argument we get:

\begin{corollary}
\label{cor:ultrahomogeneous}
The class of all finite structures in $\overline{\B}$ is the age of an ultrahomogeneous structure and its shadow is a universal graph for class $\B$.
\end{corollary}

\begin{remark}
This, of course, follows also from Proposition~1 in~\cite{Cherlin1999} where  the existence of $\omega$-categorical universal object is established. However, here  we provided an explicit construction by means of finite lifts. In fact this is the first such explicit lift (compare~\cite{Cherlin1999}) and this is of independent interest~\cite{Hubicka2013,Hubicka2009}.
\end{remark}

It is conjectured in \cite{Cherlin2011} that for
classes defined by forbidden monomorphisms from one forbidden graph the algebraic
closure operator is either unary or there is no universal $\omega$-categorical graph at
all. We believe that all such classes with unary closure operator can be proved to be Ramsey
by a generalisation of a proof presented here.
On the other hand a simple example is given in  \cite{Cherlin2011} showing that the closure 
does not need to be unary for classes defined by forbidden homomorphism from
more than one connected graph. Our techniques does not seem to directly generalise for
this case.

Other important case is the situation where the amalgamation is not free over
closed sets. Several such classes with strong amalgamation have been proved to be
Ramsey by means of Partite Construction (among those the classes mentioned in the
introduction: partial orders, metric spaces and
classes $\Forb_H(\F)$).

We hope that it is possible to combine both techniques to obtain Ramsey results on even more restricted
classes of graphs.

\medskip

\noindent {\em Acknowledgement.} We thank to a referee for remarks which improved quality of the presentation.

\bibliography{emanuel.bib}

\end{document}